\newtheorem{theorem}{Theorem}
\newtheorem{lemma}{Lemma}
\newtheorem{problem}{Problem}
\newtheorem{corollary}{Corollary}
\newtheorem{remark}{Remark}
\title{\bf \Large On directed strongly regular Cayley graphs over non-abelian groups with an abelian subgroup of index $2$}
\author{Xueyi Huang$^a$, \ \ Lu Lu$^{b,}$\footnote{Corresponding author.}\setcounter{footnote}{-1}\footnote{\emph{E-mail address:}  huangxymath@163.com (X. Huang), lulugdmath@163.com (L. Lu), jongyook@knu.ac.kr (J. Park).}, \ \  Jongyook Park$^{c}$\\[2mm]
\small $^a$School of Mathematics, East China University of Science and Technology,\\
\small Shanghai, 200237, P. R. China\\
\small $^b$School of Mathematics and Statistics, Central South University,\\ 
\small Changsha, Hunan, 410083, P. R. China\\
\small
$^c$Department of Mathematics, Kyungpook National University,\\ \small Daegu, 41566, Republic of Korea
}
\date{ }
\begin{document}
\maketitle
\begin{abstract}
In 1988, Duval introduced the concept of directed strongly regular graphs, which can be viewed as a directed graph version of strongly regular graphs. Such directed graphs have similar structural and algebraic properties to strongly regular graphs. In the past three decades, it was found that Cayley graphs, especially those over dihedral groups, play a key role in the construction of directed strongly regular graphs.  In this paper, we focus on the characterization of directed strongly regular Cayley graphs over more general  groups. Let $G$ be a non-abelian group with an abelian subgroup of index $2$. We give some necessary conditions for a Cayley graph over $G$ to be directed strongly regular, and characterize the directed strongly regular Cayley graphs over $G$ satisfying specified conditions. This extends some previous results of He and Zhang (2019).

\par\vspace{2mm}

\noindent{\bfseries Keywords:} Directed strongly regular graph; Cayley graph; non-abelian group
\par\vspace{1mm}

\noindent{\bfseries 2010 MSC:} 05C50, 05C25
\end{abstract}

\baselineskip=0.202in

\section{Introduction}

Let $\Gamma$ be a directed loopless graph on vertex set $V_\Gamma$. For any two distinct vertices $x,y$ of $\Gamma$, we write $x\rightarrow y$ to  denote that there is an arc from $x$ to $y$, and write $x\leftrightarrow y$ to represent that  $x\rightarrow y$ and $y\rightarrow x$. For convenience, we say that there is an undirected edge between $x$ and $y$ (resp., a directed edge from $x$ to $y$) if $x\leftrightarrow y$ (resp.,  $x\rightarrow y$ but $y\nrightarrow x$).
The \textit{adjacency matrix} of $\Gamma$ is defined as the $(0,1)$-matrix $A_\Gamma=(a_{xy})_{x,y\in V_\Gamma}$ with $a_{xy}=1$ if $x\rightarrow y$, and $a_{xy}=0$ otherwise. The eigenvalues of $A_\Gamma$ are also called the \textit{eigenvalues} of $\Gamma$, and the multiset of eigenvalues of $\Gamma$ is called the \textit{spectrum} of $\Gamma$, and denoted by $\mathrm{Spec}(\Gamma)$.  

In 1988, Duval \cite{DU88} intoduced the concept of directed strongly regular graphs. A \textit{directed strongly regular graph} (\textit{DSRG} for short) with parameters $(n,k,\mu,\lambda,t)$ is a $k$-regular directed loopless graph $\Gamma$ on $n$ vertices such that for every vertex $x\in V_\Gamma$  the number of  $z\in V_\Gamma$ satisfying  $x\leftrightarrow z$ is equal to $t$,  and for every pair of vertices $x,y\in V_\Gamma$ the number of $z\in V_\Gamma$ satisfying $x\rightarrow z\rightarrow y$ is equal to $\lambda$ whenever $x\rightarrow y$, and $\mu$ otherwise.
In other words, a directed loopless graph  $\Gamma$ on $n$ vertices is a DSRG with parameters $(n,k,\mu,\lambda,t)$ if and only if its adjacency matrix $A_\Gamma$ satisfies 
\[
A_\Gamma J=JA_\Gamma=kJ~\mbox{and}~ A_\Gamma^2=tI+\lambda A_\Gamma +\mu(J-I-A_\Gamma),
\]
where $J=J_n$ and $I=I_n$ denote the all-ones matrix and  identity matrix of order $n$, respectively. Clearly, $t\leq k$. A DSRG with parameters $(n,k,\mu,\lambda,k)$ (i.e., $t=k$) is also known as an (undirected) \textit{strongly regular graph} (\textit{SRG} for short) with parameters $(n,k,\lambda,\mu)$. Also, Duval \cite{DU88} proved that a DSRG with parameters  $(n,k,\mu,\lambda,0)$ (i.e., $t=0$) is a doubly regular tournament. For this reason, in this paper, we always assume that  $0<t<k$. It is known that such a DSRG has exactly three distinct eigenvalues and all of them are integers \cite{DU88}.

In the past three decades, the characterization of DSRGs has attracted a lot of interest, and several methods have been developed to investigate the constructions of DSRGs. In \cite{DU88}, Duval presented some basic properties of DSRGs, and constructed several families of DSRGs by using quadratic residue matrices, Kronecker product and block matrices. Klin, Munemasa, Muzychuk, and Zieschang \cite{KMMZ04} obtained new infinite series of DSRGs by using coherent algebras. Hobart and Justin Shaw \cite{HJ99} constructed a new infinite family of DSRGs via Cayley graphs over dihedral groups. Fiedler, Klin, and Muzychuk \cite{FKM02} confirmed the existence of DSRGs for three feasible parameter sets listed by Duval \cite{DU88}.
 Duval and Iourinski \cite{DI03} obtained a new infinite family of DSRGs via Cayley graphs over certain semidirect product groups. Brouwer, Olmez, and Song \cite{BOS12} constructed some families of DSRGs with $t=\mu$ by using antiflags of $1\frac{1}{2}$-designs. Mart\'{i}nez and Araluze \cite{MA10} defined a new combinatorial structure, called  partial sum family, and used it to obtain some families of DSRGs. He and Zhang \cite{HZ19} proved that Cayley graphs whose connection set is a union of some conjugate classes of a group cannot produce DSRGs, and obtained a larger family of DSRGs by generalizing the semidirect product method of Duval and Iourinski  \cite{DI03}.   Very recetly, He, Zhang, and Feng \cite{HZF21} characterized some Cayley DSRGs over the dihedral group $D_{p^\alpha}$, where $p$ is a prime and $\alpha\geq 1$. For more results about the constructions of DSRGs, we refer the reader to \cite{AGOS13,FZ14,FKP99,G16,OS14}.

Let $G$ be a finite group with identity $e$, and let $S$ be a multi-subset of $G\setminus \{e\}$. The \textit{Cayley multigraph}  $\mathrm{Cay}(G,S)$ is  the directed graph with vertex set $G$ in which the number of arcs from $x$ to $y$ is equal to the multiplicity of $yx^{-1}$ in $S$, for all $x,y\in G$. Here $S$ is called the \textit{connection set} of $\mathrm{Cay}(G,S)$. Clearly, $\mathrm{Cay}(G,S)$ is $|S|$-regular. In particular, if $S$ is a subset of $G\setminus \{e\}$, then $\mathrm{Cay}(G,S)$ is called a \textit{Cayley graph}.

In this paper, inspired by the work of He and Zhang \cite{HZ19} for Cayley DSRGs over dihedral groups, we focus on the characterization of Cayley DSRGs over more general groups, namely those non-abelian groups admitting an abelian subgroup of index $2$. For example, the (generalized) dihedral groups and  (generalized) dicyclic groups are such kinds of groups. Let $G$ be a non-abelian group with an abelian subgroup of index $2$. We provide some necessary  conditions  for Cayley graphs over $G$ being DSRGs. Furthermore, we characterize the Cayley DSRGs over $G$ satisfying specified conditions, which generalizes a result of He and Zhang \cite{HZ19} regarding the characterization of a special class of Cayley DSRGs over dihedral groups.

\section{Preliminaries}

Let $G$ be a finite group. Let $\mathbb{C}G$ denote the group algebra of $G$ over $\mathbb{C}$, and  $\mathrm{Irr}(G)$ the set of irreducible characters of $G$. For any multi-subset $X$ of $G$, we denote \[
\overline{X}=\sum_{x\in X} \Delta_X(x)\cdot x\in \mathbb{C}G,
\] 
where
$\Delta_X: X\rightarrow \mathbb{Z}$ is the \textit{multiplicity function} defined by taking $\Delta_X(x)$ as the multiplicity of $x$ in $X$, for all $x\in X$. The following equivalence condition for  Cayley graphs being DSRGs can be deduced directly from the  definitions of Cayley graphs and DSRGs.

\begin{lemma}[\cite{HZ19}]\label{lem::1}
A Cayley graph $\mathrm{Cay}(G,S)$ is a DSRG with parameters $(n,k,\mu,\lambda,t)$ if and only if $|G|=n$, $|S|=k$, and \[\overline{S}^2=te+\lambda \overline{S}+\mu(\overline{G}-e-\overline{S}).\]
In particular, a Cayley graph $\mathrm{Cay}(G,S)$ with $S=S^{-1}$ is a SRG with parameters $(n,k,\lambda,\mu)$ if and only if $|G|=n$, $|S|=k$, and 
\[\overline{S}^2=ke+\lambda \overline{S}+\mu(\overline{G}-e-\overline{S}).\]
\end{lemma}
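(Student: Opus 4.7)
The plan is to translate the matrix identity characterizing DSRGs directly into a coefficient identity in the group algebra $\mathbb{C}G$. The key observation is that the $(x,y)$ entry of the adjacency matrix $A$ of $\mathrm{Cay}(G,S)$ depends only on $g=yx^{-1}$: indeed $A_{x,y}=\Delta_S(yx^{-1})$, $I_{x,y}=[yx^{-1}=e]$, and $J_{x,y}=1$. Thus $A_{x,y}$, $I_{x,y}$, $J_{x,y}$ coincide with the coefficients of $g=yx^{-1}$ in $\overline{S}$, $e$, $\overline{G}$, respectively. Since $g=yx^{-1}$ ranges over all of $G$ as $y$ varies (with $x$ fixed), each of these matrices is uniquely determined by its corresponding element of $\mathbb{C}G$.

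Next I would check that squaring is preserved under this correspondence, i.e., that the coefficient of $g$ in $\overline{S}^2$ equals $(A^2)_{x,y}$ whenever $g=yx^{-1}$. A direct computation yields $(A^2)_{x,y}=\sum_z \Delta_S(zx^{-1})\Delta_S(yz^{-1})$; substituting $u=zx^{-1}$ converts this to $\sum_u \Delta_S(u)\Delta_S(gu^{-1})$, and reparametrizing via $s=gu^{-1}$ gives $\sum_s \Delta_S(s)\Delta_S(s^{-1}g)$, which is precisely the coefficient of $g$ in $\overline{S}^2$. With squaring preserved, the matrix identity $A^2=tI+\lambda A+\mu(J-I-A)$ is equivalent---coefficient by coefficient---to $\overline{S}^2=te+\lambda\overline{S}+\mu(\overline{G}-e-\overline{S})$ in $\mathbb{C}G$.

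Finally, the $k$-regularity conditions $AJ=JA=kJ$ are automatic for any Cayley multigraph, since both the row and column sums of $A$ equal $|S|$; so once $|G|=n$ and $|S|=k$ are imposed, the full DSRG definition reduces to the matrix identity above, and hence to the asserted group algebra identity. The SRG case then follows by specialization: $S=S^{-1}$ makes $A$ symmetric, every arc becomes an undirected edge, $t=k$, and the formula reads $\overline{S}^2=ke+\lambda\overline{S}+\mu(\overline{G}-e-\overline{S})$. The main (minor) obstacle is the bookkeeping in the substitution step: in a nonabelian $G$ one must track the order of multiplication carefully so that the coefficient of $yx^{-1}$ (and not $x^{-1}y$) in $\overline{S}^2$ is what matches $(A^2)_{x,y}$.
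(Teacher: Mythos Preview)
Your proposal is correct. The paper does not actually prove this lemma: it is quoted from \cite{HZ19} with the remark that it ``can be deduced directly from the definitions of Cayley graphs and DSRGs,'' and your argument is precisely that direct deduction via the standard correspondence between the adjacency matrix of $\mathrm{Cay}(G,S)$ and the element $\overline{S}\in\mathbb{C}G$.
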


Let  $\chi\in \mathrm{Irr}(G)$ be an irreducible character of $G$. For any  $\mathcal{X}=\sum_{g\in G}c_g g\in \mathbb{C}G$, we denote
\[
\chi(\mathcal{X})=\sum_{g\in G}c_g \chi(g)\in \mathbb{C}.
\] 
If $G$ is abelian, then $\chi(\mathcal{X}\cdot \mathcal{Y})=\chi(\mathcal{X})\cdot \chi(\mathcal{Y})$ for 
all $\mathcal{X},\mathcal{Y}\in \mathbb{C}G$. Moreover, we have the following result.

\begin{lemma}[\cite{HZ19}]\label{lem::2}
Let $G$ be an abelian group and $\mathcal{X},\mathcal{Y}\in \mathbb{C}G$. Then $\mathcal{X}=\mathcal{Y}$ if and only if $\chi(\mathcal{X})=\chi(\mathcal{Y})$ for all $\chi\in\mathrm{Irr}(G)$.
\end{lemma}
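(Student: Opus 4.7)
The plan is to handle the two directions separately. The forward implication is trivial: if $\mathcal{X}=\mathcal{Y}$ in $\mathbb{C}G$, then extending any $\chi\in\mathrm{Irr}(G)$ by $\mathbb{C}$-linearity to the whole group algebra immediately gives $\chi(\mathcal{X})=\chi(\mathcal{Y})$.

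For the nontrivial reverse direction I would form the difference $\mathcal{Z}=\mathcal{X}-\mathcal{Y}=\sum_{g\in G}c_g\,g$ and show that the hypothesis $\chi(\mathcal{Z})=0$ for every $\chi\in\mathrm{Irr}(G)$ forces $c_g=0$ for all $g\in G$. Because $G$ is abelian, each irreducible representation is one-dimensional and $|\mathrm{Irr}(G)|=|G|$, so the hypothesis is really a system of $|G|$ linear equations in the $|G|$ unknowns $\{c_g\}_{g\in G}$, whose coefficient matrix is the character table. The key tool is the column orthogonality relation
\[
\sum_{\chi\in\mathrm{Irr}(G)}\chi(g)\,\overline{\chi(h)}=|G|\,\delta_{g,h}\quad(g,h\in G),
\]
which asserts that the columns of the character table are mutually orthogonal and nonzero, hence that the table is invertible.

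Operationally, for each fixed $h\in G$ I multiply the identity $\sum_{g\in G}c_g\,\chi(g)=0$ by $\overline{\chi(h)}$, sum over $\chi\in\mathrm{Irr}(G)$, and interchange the order of summation; the inner sum collapses to $|G|\,\delta_{g,h}$ by the above orthogonality, leaving $|G|\,c_h=0$, so $c_h=0$. Since $h$ was arbitrary, $\mathcal{Z}=0$ and $\mathcal{X}=\mathcal{Y}$.

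This result is classical, so there is no substantive obstacle. The only point worth emphasizing is that the argument relies essentially on the abelian hypothesis: it is what makes $|\mathrm{Irr}(G)|=|G|$, so that the character table is square, and what lets the characters separate arbitrary points of $\mathbb{C}G$. For a general group only class sums, rather than arbitrary elements of the group algebra, are determined by their character values; this is precisely why the subsequent analysis of Cayley graphs over groups with an abelian subgroup of index $2$ will have to work on the abelian subgroup and then account for the nontrivial coset separately.
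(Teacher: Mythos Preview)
Your argument is correct and is the standard proof of this classical fact: the forward direction is immediate, and for the reverse you use column orthogonality of the character table to invert the linear system and conclude that all coefficients of $\mathcal{X}-\mathcal{Y}$ vanish. Note, however, that the paper does not supply its own proof of this lemma at all---it is simply quoted from \cite{HZ19}---so there is no ``paper's proof'' to compare against; your write-up stands on its own as a complete justification.
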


The following lemma due to Babai \cite{B79} provides an expression for the  eigenvalues of  Cayley multigraphs over abelian groups in terms of irreducible characters.

\begin{lemma}[\cite{B79}]\label{lem::3}
Let $G$ be an abelian group, and let $S$ be a multi-subset of $G$. Then the eigenvalues of  $\mathrm{Cay}(G,S)$  are $\chi(\overline{S})$, for  $\chi\in \mathrm{Irr}(G)$.
\end{lemma}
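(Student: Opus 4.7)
The plan is to realize the adjacency matrix of $\mathrm{Cay}(G,S)$ as an operator on the group algebra and then to exhibit an explicit eigenbasis indexed by the irreducible characters of the abelian group $G$, so that the desired eigenvalues fall out by a direct computation.

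More concretely, I would begin by writing the adjacency matrix $A = A_{\mathrm{Cay}(G,S)}$ in terms of the multiplicity function $\Delta_S$: by the definition of the Cayley multigraph, the entry of $A$ in the row indexed by $x$ and the column indexed by $y$ equals $\Delta_S(yx^{-1})$. For each $\chi\in\mathrm{Irr}(G)$, consider the vector $v_\chi=(\chi(g))_{g\in G}\in\mathbb{C}^{G}$. The core calculation is then
\[
(Av_\chi)_x = \sum_{y\in G} \Delta_S(yx^{-1})\,\chi(y) = \sum_{s\in G} \Delta_S(s)\,\chi(sx),
\]
where in the last step I reindex via $s=yx^{-1}$. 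Since $G$ is abelian, $\chi$ is a homomorphism into $\mathbb{C}^\times$, so $\chi(sx)=\chi(s)\chi(x)$, which gives
\[
(Av_\chi)_x = \chi(x)\sum_{s\in G}\Delta_S(s)\chi(s) = \chi(\overline{S})\,(v_\chi)_x.
\]
Hence $v_\chi$ is an eigenvector of $A$ with eigenvalue $\chi(\overline{S})$.

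It remains to verify that this produces all $n=|G|$ eigenvalues with correct multiplicities. Since $G$ is abelian, $|\mathrm{Irr}(G)|=|G|$, and by the (first) orthogonality relation the vectors $\{v_\chi:\chi\in\mathrm{Irr}(G)\}$ are pairwise orthogonal and nonzero in $\mathbb{C}^{G}$, hence form a basis. Therefore the multiset of eigenvalues of $A$ is exactly $\{\chi(\overline{S}):\chi\in\mathrm{Irr}(G)\}$.

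I do not anticipate a serious obstacle here: the argument is a standard diagonalization of a convolution operator on an abelian group, and all ingredients (the interpretation of $\Delta_S$, the multiplicativity of $\chi$, and the orthogonality of characters) are at hand. The only point needing care is the bookkeeping between the element $\overline{S}\in\mathbb{C}G$ and the coefficients $\Delta_S(s)$, to ensure that the scalar produced by the calculation is literally $\chi(\overline{S})$ as defined in the preliminaries rather than a sum over distinct elements of $S$ without multiplicity.
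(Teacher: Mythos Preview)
Your argument is correct and is the standard diagonalization of the adjacency operator of an abelian Cayley (multi)graph via characters; the bookkeeping with $\Delta_S$ and $\overline{S}$ is handled properly, and the orthogonality of characters indeed yields a full eigenbasis. Note, however, that the paper does not supply its own proof of this lemma: it is quoted from Babai \cite{B79} without argument, so there is nothing in the paper to compare your approach against beyond observing that what you wrote is exactly the classical proof one would expect.
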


\begin{lemma}[\cite{KMMZ04}]\label{lem::4}
Let $\Gamma$ be a regular non-empty directed multigraph without undirected edges. Then $\Gamma$ has at least one non-real eigenvalue.
\end{lemma}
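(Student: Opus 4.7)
The plan is to argue by contradiction via a two-way computation of $\mathrm{tr}(A_\Gamma^2)$. Suppose, toward a contradiction, that every eigenvalue of $A = A_\Gamma$ is real. Since $\Gamma$ is $k$-regular and non-empty, we have $k \geq 1$ and the all-ones vector is a right eigenvector of $A$ with eigenvalue $k$; hence $k$ appears among the eigenvalues of $A$. With all eigenvalues $\lambda_1, \ldots, \lambda_n$ assumed real, each $\lambda_i^2 \geq 0$, and therefore
\[
\mathrm{tr}(A^2) \;=\; \sum_{i=1}^n \lambda_i^2 \;\geq\; k^2 \;>\; 0.
\]

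Next I would compute $\mathrm{tr}(A^2)$ directly from the entries: $\mathrm{tr}(A^2) = \sum_{x,y} a_{xy} a_{yx}$. The diagonal terms $a_{xx}^2$ all vanish because $\Gamma$ is loopless. For distinct vertices $x \neq y$, the hypothesis that $\Gamma$ has no undirected edges says that we cannot have arcs both from $x$ to $y$ and from $y$ to $x$; for a multigraph this is the condition $a_{xy} \cdot a_{yx} = 0$ whenever $x \neq y$. Hence $\mathrm{tr}(A^2) = 0$, contradicting the lower bound above.

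Therefore not all eigenvalues of $A$ can be real. Since the characteristic polynomial of $A$ has real (indeed integer) coefficients, the non-real eigenvalues occur in complex conjugate pairs, and in particular $\Gamma$ has at least one non-real eigenvalue.

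The only genuine subtlety is pinning down the multigraph reading of ``no undirected edges'' as $a_{xy} a_{yx} = 0$ for $x \neq y$, which makes the trace computation immediate; no further structural assumption on $\Gamma$ is needed, so the argument is very short once that convention is fixed.
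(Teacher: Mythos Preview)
Your argument is correct. The paper does not actually supply a proof of this lemma; it merely cites the result from \cite{KMMZ04} and uses it as a black box. So there is nothing in the paper to compare your approach against.

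For the record, your trace computation is exactly the standard proof: the combinatorial identity $\mathrm{tr}(A^2)=\sum_{x,y}a_{xy}a_{yx}=0$ (using looplessness and the absence of mutual arcs) versus the spectral lower bound $\mathrm{tr}(A^2)=\sum_i\lambda_i^2\ge k^2>0$ under the real-spectrum assumption. One small remark: the lemma as stated does not say ``loopless,'' but the paper's standing convention from the first paragraph of the introduction is that all directed graphs considered are loopless, so your use of $a_{xx}=0$ is justified in context. Your final observation about conjugate pairs is correct but unnecessary for the conclusion.
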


By Lemmas \ref{lem::3} and \ref{lem::4}, we deduce the following result immediately.

\begin{corollary}[\cite{HZ19}]\label{cor::1}
Let  $G$ be an abelian group. If $\Gamma=\mathrm{Cay}(G,S)$ is a Cayley multigraph with $S\neq S^{-1}$, then  $\Gamma$ has at least one non-real eigenvalue.
\end{corollary}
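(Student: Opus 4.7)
The plan is to split $\overline{S}$ into a ``symmetric part'' and an ``antisymmetric part'' in $\mathbb{C}G$, and then squeeze a non-real eigenvalue out of Lemma~\ref{lem::4} via Lemma~\ref{lem::3}. Concretely, for each $g\in G$ set $m(g)=\Delta_S(g)$, $a(g)=\min\{m(g),m(g^{-1})\}$, and $b(g)=m(g)-a(g)$; let $A$ and $B$ denote the multi-subsets of $G$ with these multiplicity functions. Then $\overline{S}=\overline{A}+\overline{B}$, $A=A^{-1}$, and $b(g)b(g^{-1})=0$ for every $g$, so $\mathrm{Cay}(G,B)$ has no undirected edges. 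Moreover, $S\neq S^{-1}$ forces $b\not\equiv 0$, i.e.\ $B\neq\emptyset$.

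Next I would check that $\chi(\overline{A})\in\mathbb{R}$ for every $\chi\in\mathrm{Irr}(G)$. Since $G$ is abelian, $\chi$ is a one-dimensional representation, so $\chi(g^{-1})=\overline{\chi(g)}$; reindexing the sum $\chi(\overline{A})=\sum_{g\in G} a(g)\chi(g)$ via $g\mapsto g^{-1}$ and using $a(g)=a(g^{-1})$ yields $\chi(\overline{A})=\overline{\chi(\overline{A})}$, as required.

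Finally, Lemma~\ref{lem::4} applies to the regular non-empty directed multigraph $\mathrm{Cay}(G,B)$ and produces a non-real eigenvalue, which by Lemma~\ref{lem::3} has the form $\chi(\overline{B})$ for some $\chi\in\mathrm{Irr}(G)$. For this $\chi$, $\chi(\overline{S})=\chi(\overline{A})+\chi(\overline{B})$ has a real first summand and a non-real second summand, hence is non-real; another application of Lemma~\ref{lem::3} identifies it as an eigenvalue of $\mathrm{Cay}(G,S)$. No step is a serious obstacle; the main conceptual move is recognizing that peeling off the symmetric part leaves exactly the situation that Lemma~\ref{lem::4} is designed for.
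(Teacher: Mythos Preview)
Your argument is correct and is precisely the route the paper indicates: the corollary is stated to follow ``immediately'' from Lemmas~\ref{lem::3} and~\ref{lem::4}, and your symmetric/antisymmetric splitting of $\overline{S}$ is exactly the natural step needed to put $\mathrm{Cay}(G,B)$ into the scope of Lemma~\ref{lem::4} before reading off the non-real eigenvalue via Lemma~\ref{lem::3}. Nothing further is required.
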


\begin{lemma}\label{lem::5}
If $\Gamma$ is a connected undirected multigraph with second largest eigenvalue at most $0$, then the underlying graph of $\Gamma$ is complete multipartite.
\end{lemma}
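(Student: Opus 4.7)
The plan is to use the standard characterization: a simple graph is complete multipartite if and only if non-adjacency is a transitive relation on its vertex set, i.e., the graph contains no three vertices $u,v,w$ with $u\sim w$ while $u\not\sim v$ and $v\not\sim w$. So, writing $A$ for the adjacency matrix of the multigraph $\Gamma$ (so $A$ is a symmetric, entrywise non-negative integer matrix with zero diagonal), it suffices to rule out the existence of distinct vertices $u,v,w$ with $a_{uw}\geq 1$ and $a_{uv}=a_{vw}=0$.

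The key structural input is Perron--Frobenius: since $\Gamma$ is connected, $A$ is irreducible and non-negative, so $\lambda_1$ is a simple eigenvalue with a strictly positive eigenvector $\mathbf{v}_1$, and every other eigenvalue is at most $\lambda_2\leq 0$. Therefore the symmetric matrix
\[
M \;=\; A \;-\; \frac{\lambda_1}{\|\mathbf{v}_1\|^2}\,\mathbf{v}_1\mathbf{v}_1^{T}
\]
is negative semidefinite. In particular, for every vector $\mathbf{x}$ with $\mathbf{v}_1^{T}\mathbf{x}=0$ we have $\mathbf{x}^{T}A\mathbf{x}=\mathbf{x}^{T}M\mathbf{x}\leq 0$.

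To derive a contradiction I would, assuming $u,v,w$ as above exist, construct an explicit $\mathbf{x}$ supported on $\{u,v,w\}$, orthogonal to $\mathbf{v}_1$, with $\mathbf{x}^{T}A\mathbf{x}>0$. Write $a=\mathbf{v}_1(u)$, $b=\mathbf{v}_1(v)$, $c=\mathbf{v}_1(w)$, all strictly positive. Set $\mathbf{x}(u)=1$, $\mathbf{x}(w)=1$, $\mathbf{x}(v)=-(a+c)/b$, and $\mathbf{x}(y)=0$ for all other $y$. Then $\mathbf{v}_1^{T}\mathbf{x}=a-(a+c)+c=0$, while the only nonzero contribution to the quadratic form comes from the $(u,w)$-entries of $A$, giving $\mathbf{x}^{T}A\mathbf{x}=2a_{uw}\geq 2>0$, contradicting $\mathbf{x}^{T}A\mathbf{x}\leq 0$.

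The routine Cauchy-interlacing attempt (restricting $A$ to the principal $3\times 3$ submatrix indexed by $\{u,v,w\}$) only yields $\lambda_2(A)\geq 0$, which does not contradict the hypothesis $\lambda_2\leq 0$. The main obstacle the argument must overcome is precisely this borderline case, and that is why connectivity is used in an essential way: it is the strict positivity of the Perron eigenvector that lets us tilt the test vector on the middle coordinate $v$ so that $\mathbf{x}\perp \mathbf{v}_1$ while keeping $\mathbf{x}(u)\mathbf{x}(w)>0$, turning the weak inequality from interlacing into the strict inequality $\mathbf{x}^{T}A\mathbf{x}>0$ needed for the contradiction.
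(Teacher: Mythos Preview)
Your proof is correct and takes a genuinely different route from the paper's.

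The paper proceeds by forbidden induced subgraphs: it lists four small graphs $I_1,\dots,I_4$ (namely $K_1\cup K_2$, $2K_2$, $P_4$, and the paw) whose absence forces the underlying simple graph to be complete multipartite, writes down the corresponding weighted principal submatrices $M_1,\dots,M_4$ of $A_\Gamma$, checks that each has at least two non-negative eigenvalues, and then appeals to Cauchy interlacing. As you explicitly observe, the bare interlacing inequality from the $3\times 3$ block $M_1$ only gives $\lambda_2(\Gamma)\ge 0$, so the contradiction in the borderline case $\lambda_2(\Gamma)=0$ really rests on the connected graph containing one of the larger configurations $I_2,I_3,I_4$, whose weighted matrices have \emph{strictly} positive second eigenvalue.

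Your argument sidesteps this case analysis entirely. You use only the single forbidden configuration $K_1\cup K_2$ (equivalently, non-transitivity of non-adjacency) and replace interlacing by a Rayleigh-quotient argument: the strict positivity of the Perron eigenvector, guaranteed by connectivity, lets you manufacture a three-coordinate test vector $\mathbf{x}\perp\mathbf{v}_1$ with $\mathbf{x}^T A\,\mathbf{x}=2a_{uw}>0$, directly contradicting $\mathbf{x}^T A\,\mathbf{x}\le 0$ on $\mathbf{v}_1^{\perp}$. This makes the role of connectivity completely explicit and handles the $\lambda_2=0$ boundary in one stroke. The paper's approach is more elementary in that it avoids Perron--Frobenius, at the cost of a longer list of forbidden subgraphs; yours is shorter, uses only one forbidden pattern, and is arguably cleaner.
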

\begin{proof}
Let $\Gamma_1$ denote the underlying graph of $\Gamma$. To prove that $\Gamma_1$ is a complete multipartite graph, it suffices to prove that $\Gamma_1$ does not contain any one of $I_1$, $I_2$, $I_3$, $I_4$ (as shown in Figure \ref{fig::1}) as an induced subgraph. Indeed, if $I_1$, $I_2$, $I_3$ or $I_4$ is an induced subgraph of  $\Gamma_1$, then $\Gamma$ has at least one of the following matrices as a principal submatrix: 
$$
M_1=\begin{pmatrix}
0 &0 &0\\
0&0& a\\
0& a &0
\end{pmatrix},
M_2=\begin{pmatrix}
0 &a &0 &0\\
a&0& 0 &0\\
0& 0 &0 &b\\
0 &0& b&0
\end{pmatrix},
M_3=\begin{pmatrix}
0 &a &0 &0\\
a&0& b &0\\
0& b &0 &c\\
0 &0& c&0
\end{pmatrix},
M_4=\begin{pmatrix}
0 &a &c &0\\
a&0& b &0\\
c& b &0 &d\\
0 &0& d&0
\end{pmatrix},
$$
where $a,b,c,d$ are positive integers. However, it is easy to check that each $M_i$ ($i=1,2,3,4$) has at least two non-negative eigenvalues. Thus, by Cauchy Interlacing Theorem, $\Gamma$ has at least two non-negative eigenvalues, a contradiction.
\end{proof}

\begin{figure}[t]
    \centering
\includegraphics[width=10cm]{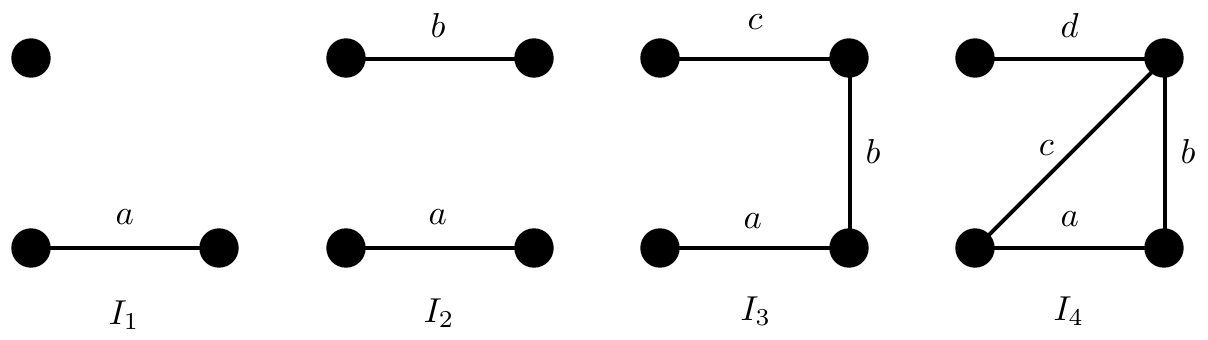}
    \caption{The graphs $I_1$, $I_2$, $I_3$ and $I_4$.}
    \label{fig::1}
\end{figure}

\section{Main results}
In this section, we focus on the characterization of Cayley DSRGs over non-abelian groups admitting an abelian subgroup of index $2$.

Let $G$ be a non-abelian group of order $2n$ ($n>1$) with an abelian subgroup $A$ of index $2$. Then it is not difficult to see that $G$ is of the form 
\begin{equation}\label{equ::1}
    G=\langle A,\beta\mid \beta^2=\alpha, \beta a\beta^{-1}=f(a)~\mbox{for all $a\in A$}\rangle,
\end{equation}
where $\alpha$ is any given element of $A$, and $f$ is any given automorphism of $A$ with order $2$ such that $f(\alpha)=\alpha$. Clearly, $G=A\cup A\beta$. In particular, if $A=\langle a\mid a^n=1 \rangle$ is a cyclic group and $f(a)=a^{-1}$, then $G$ is known as a \textit{dihedral group} when $\alpha=e$, and a \textit{dicyclic group} when  $\alpha=a^{n/2}$ ($n$ is even). 

In what follows, we always assume that $\Gamma$ is a Cayley graph over the group $G$ given by \eqref{equ::1}, that is,  $\Gamma=\mathrm{Cay}(G,X\cup Y\beta)$ with $X\subseteq A\setminus\{e\}$ and $Y\subseteq A$. For any $\mathcal{X}=\sum_{a\in A}c_a\cdot a\in \mathbb{C}A$, we denote
$f(\mathcal{X})=\sum_{a\in A}c_a\cdot f(a)\in \mathbb{C}A.
$
Clearly,  $f(\mathcal{X}\cdot \mathcal{Y})=f(\mathcal{X})\cdot f(\mathcal{Y})=f(\mathcal{Y})\cdot f(\mathcal{X})=f(\mathcal{Y}\cdot \mathcal{X})$ for 
any $\mathcal{X},\mathcal{Y}\in \mathbb{C}A$. Then, by Lemma \ref{lem::1}, we have the following criterion for $\Gamma$ being a DSRG.

\begin{lemma}\label{lem::6}
The Cayley graph $\Gamma$ is a DSRG with parameters $(2n,k,\mu,\lambda,t)$ if and only if $|X|+|Y|=k$, and 
\begin{align}
&\overline{X}^2+\overline{Y}f(\overline{Y})\alpha=(t-\mu)e+(\lambda-\mu)\overline{X}+\mu\overline{A},\label{equ::2}\\
&\overline{X}\cdot \overline{Y}+\overline{Y}f(\overline{X})=(\lambda-\mu)\overline{Y}+\mu\overline{A}.\label{equ::3}
\end{align}
\end{lemma}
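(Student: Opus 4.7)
The plan is to apply Lemma \ref{lem::1} to the connection set $S=X\cup Y\beta$ and then expand $\overline{S}^2$ in the group algebra $\mathbb{C}G$ using the coset decomposition $G=A\cup A\beta$. First I would observe that $\overline{S}=\overline{X}+\overline{Y}\beta$, so Lemma \ref{lem::1} says that $\Gamma$ is a DSRG with parameters $(2n,k,\mu,\lambda,t)$ precisely when $|X|+|Y|=k$ and
\[
(\overline{X}+\overline{Y}\beta)^2 = (t-\mu)e + (\lambda-\mu)(\overline{X}+\overline{Y}\beta) + \mu\,\overline{G},
\]
where I have rewritten the RHS of Lemma \ref{lem::1} by grouping the $\overline{S}$ terms.

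Next I would carry out the expansion using the commutation rule $\beta a=f(a)\beta$ (which follows from $\beta a\beta^{-1}=f(a)$), and its consequence $\beta\,\overline{X}=f(\overline{X})\beta$ for $\overline{X}\in\mathbb{C}A$. Together with $\beta^2=\alpha\in A$ and the fact that $\alpha$ is central in $\mathbb{C}A$ (since $A$ is abelian), this yields
\[
(\overline{X}+\overline{Y}\beta)^2 = \overline{X}^2 + \overline{Y}f(\overline{Y})\alpha + \bigl(\overline{X}\cdot\overline{Y} + \overline{Y}f(\overline{X})\bigr)\beta,
\]
with the first two summands lying in $\mathbb{C}A$ and the last in $\mathbb{C}A\cdot\beta$. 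Similarly, using $\overline{G}=\overline{A}+\overline{A}\beta$, the RHS of the DSRG equation decomposes as
\[
\bigl[(t-\mu)e + (\lambda-\mu)\overline{X} + \mu\overline{A}\bigr] + \bigl[(\lambda-\mu)\overline{Y} + \mu\overline{A}\bigr]\beta.
\]

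Finally, since every element of $G$ lies in exactly one of the cosets $A$ or $A\beta$, each element of $\mathbb{C}G$ has a unique expression as $\mathcal{U}+\mathcal{V}\beta$ with $\mathcal{U},\mathcal{V}\in\mathbb{C}A$. Comparing the $A$-parts and the $A\beta$-parts of the two displayed identities then yields equations \eqref{equ::2} and \eqref{equ::3}, and the argument is reversible, giving the ``if and only if'' statement. There is no real obstacle here: the only subtlety is bookkeeping the noncommutativity via the identity $\beta\,\overline{X}=f(\overline{X})\beta$ and making sure that $\alpha$ can be pulled out of $\overline{Y}f(\overline{Y})\alpha$ without ambiguity (which is fine because $A$ is abelian).
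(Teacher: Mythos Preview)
Your proposal is correct and follows exactly the approach the paper intends: the paper simply states that Lemma~\ref{lem::6} follows from Lemma~\ref{lem::1} (after noting the identity $f(\mathcal{X}\mathcal{Y})=f(\mathcal{X})f(\mathcal{Y})$), and your write-up spells out precisely the coset-decomposition computation that makes this immediate.
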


For any subsets $X_1,X_2$ of $A$, we denote by $X_1\sqcup X_2$ the multiset consisting of all elements in $X_1$ and $X_2$. Clearly, if $X_1\cap X_2=\emptyset$, then $X_1\sqcup X_2=X_1\cup X_2$. Using Lemma \ref{lem::6}, we first give a necessary condition for $\Gamma$ being a DSRG in terms of the irreducible characters of  $A$.

\begin{theorem}\label{thm::1}
Suppose that the Cayley graph $\Gamma$ is a DSRG with parameters $(2n,k,\mu,\lambda,t)$. Then the following statements hold.
\begin{enumerate}[(i)]
\item For any non-trivial  $\chi\in\mathrm{Irr}(A)$, if $\chi(\overline{Y})\neq 0$ then  $\chi(\overline{X}+f(\overline{X}))=\lambda-\mu$, and if $\chi(\overline{Y})=0$ then $\chi(\overline{X}),\chi(f(\overline{X}))\in \{|X|-|Y|,(\lambda-\mu)-(|X|-|Y|)\}$. 
\item The Cayley multigraph $\Gamma':=\mathrm{Cay}(A,X\sqcup f(X))$ is undirected, i.e., $X\sqcup f(X)$ is inverse closed, and all  possible eigenvalues of $\Gamma'$ are $2|X|$ (with multiplicity $1$), $\lambda-\mu$, $2(|X|-|Y|)$ and $2[(\lambda-\mu)-(|X|-|Y|)]$. 
In particular, if $\chi(\overline{Y})\neq 0$ for all non-trivial $\chi\in \mathrm{Irr}(A)$, then $\Gamma'$ has exactly two distinct eigenvalues $2|X|$ (with multiplicity $1$) and $\lambda-\mu$. 
\end{enumerate}
\end{theorem}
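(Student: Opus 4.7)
The plan is to obtain both parts by applying the irreducible characters of the abelian subgroup $A$ to the two identities in Lemma \ref{lem::6}. Since $A$ is abelian, every $\chi\in\mathrm{Irr}(A)$ is multiplicative on $\mathbb{C}A$ and satisfies $\chi(\overline{A})=0$ whenever $\chi$ is non-trivial; these two facts drive the whole computation.

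For part (i), I fix a non-trivial $\chi\in\mathrm{Irr}(A)$ and apply $\chi$ to \eqref{equ::3}. Using multiplicativity and $\chi(\overline{A})=0$, the identity becomes
\[
\chi(\overline{Y})\bigl[\chi(\overline{X})+\chi(f(\overline{X}))\bigr]=(\lambda-\mu)\chi(\overline{Y}),
\]
so when $\chi(\overline{Y})\neq 0$ we may cancel and obtain $\chi(\overline{X}+f(\overline{X}))=\lambda-\mu$, which is the first clause. When $\chi(\overline{Y})=0$, applying $\chi$ to \eqref{equ::2} collapses to $\chi(\overline{X})^2-(\lambda-\mu)\chi(\overline{X})-(t-\mu)=0$. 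Because $f$ is an order-$2$ automorphism of $A$, $\chi\circ f$ is again in $\mathrm{Irr}(A)$ and $(\chi\circ f)\circ f=\chi$, so the same argument applied to $\chi\circ f$ shows that $\chi(f(\overline{X}))$ satisfies the same quadratic. To identify its two roots explicitly, I apply the trivial character $\chi_0$ to both \eqref{equ::2} and \eqref{equ::3}, which yields
\[
|X|^2+|Y|^2=(t-\mu)+(\lambda-\mu)|X|+\mu n,\qquad 2|X||Y|=(\lambda-\mu)|Y|+\mu n;
\]
eliminating $\mu n$ between these two scalar relations gives $(|X|-|Y|)^2-(\lambda-\mu)(|X|-|Y|)-(t-\mu)=0$, so $|X|-|Y|$ is itself a root and, by Vieta, the other root must be $(\lambda-\mu)-(|X|-|Y|)$. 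This completes part (i).

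For part (ii), Lemma \ref{lem::3} identifies the eigenvalues of $\Gamma'=\mathrm{Cay}(A,X\sqcup f(X))$ as the values $\chi(\overline{X})+\chi(f(\overline{X}))$ for $\chi\in\mathrm{Irr}(A)$. The trivial character contributes $2|X|$; part (i) then shows that every non-trivial character contributes either $\lambda-\mu$ (when $\chi(\overline{Y})\neq 0$) or, when $\chi(\overline{Y})=0$, one of the three possible sums $2(|X|-|Y|)$, $\lambda-\mu$, $2[(\lambda-\mu)-(|X|-|Y|)]$ of two roots of the quadratic. In particular every eigenvalue of $\Gamma'$ is real, so the contrapositive of Corollary \ref{cor::1} forces $X\sqcup f(X)=(X\sqcup f(X))^{-1}$, that is, $\Gamma'$ is undirected. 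The eigenvalue set is exactly the one claimed, and the multiplicity-one statement for $2|X|$ reflects the fact that only $\chi_0$ can saturate the bound $|\chi(\overline{X})|\le|X|$ simultaneously with $|\chi(f(\overline{X}))|\le|X|$ under the constraints imposed by (i). The final sentence of the theorem is now immediate: if $\chi(\overline{Y})\neq 0$ for every non-trivial $\chi$, then the only possible eigenvalues are $2|X|$ and $\lambda-\mu$.

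The main obstacle, and the only step that is not routine character bookkeeping, is the identification of the two roots of $z^2-(\lambda-\mu)z-(t-\mu)$ as precisely $|X|-|Y|$ and $(\lambda-\mu)-(|X|-|Y|)$; this is what converts the abstract statement ``$\chi(\overline{X})$ is a root of the DSRG eigenvalue quadratic'' into the explicit combinatorial values appearing in the theorem, and it essentially requires the trivial-character consequences of both \eqref{equ::2} and \eqref{equ::3} to be combined. Once that identification is in hand, the remainder is multiplicativity of characters on $\mathbb{C}A$ plus a single application of Corollary \ref{cor::1}.
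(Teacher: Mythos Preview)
Your proof is correct and follows essentially the same approach as the paper: apply non-trivial characters to \eqref{equ::3} to get the first clause of (i), apply them to \eqref{equ::2} to obtain the quadratic when $\chi(\overline{Y})=0$, subtract the trivial-character evaluations of \eqref{equ::2} and \eqref{equ::3} to identify the roots, and then invoke Lemma~\ref{lem::3} and Corollary~\ref{cor::1} for (ii). The only cosmetic difference is that the paper handles $\chi(f(\overline{X}))$ by first applying $f$ to \eqref{equ::2} and then $\chi$, whereas you apply $\chi\circ f$ directly; these are the same operation, though when you invoke ``the same argument'' you are implicitly using that $\overline{Y}f(\overline{Y})\alpha$ is $f$-invariant (so that $(\chi\circ f)$ still kills it via the factor $\chi(\overline{Y})=0$), which is worth making explicit.
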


\begin{proof}
Let $\chi\in \mathrm{Irr}(A)$ be any non-trivial irreducible character of $A$. Then $\chi(\overline{A})=0$, and it follows from \eqref{equ::3} that
\[\chi(\overline{Y})\left(\chi(\overline{X}+f(\overline{X}))-(\lambda-\mu)\right)=0.\]
If $\chi(\overline{Y})\ne0$, then $\chi(\overline{X}+f(\overline{X}))=\lambda-\mu$, as required. If  $\chi(\overline{Y})=0$, then from \eqref{equ::2} we obtain 
\begin{equation}\label{equ::4}
    \chi(\overline{X})^2=(t-\mu)+(\lambda-\mu)\chi(\overline{X}).
\end{equation}
On the other hand, by applying the trivial character to both sides of \eqref{equ::2} and \eqref{equ::3}, we obtain 
$|X|^2+|Y|^2=(t-\mu)+(\lambda-\mu)|X|+\mu n$ and
$2|X|\cdot |Y|=(\lambda-\mu)|Y|+\mu n$, respectively. It follows that \begin{equation}\label{equ::5}
(|X|-|Y|)^2=(t-\mu)+(\lambda-\mu)(|X|-|Y|).
\end{equation}
Combining \eqref{equ::4} with \eqref{equ::5}, we get $\chi(\overline{X})=|X|-|Y|$ or $(\lambda-\mu)-(|X|-|Y|)$, as desired. Furthermore, by applying $f$ to both sides of \eqref{equ::2}, we have 
$f(\overline{X})^2+\overline{Y}f(\overline{Y})\alpha=(t-\mu)e+(\lambda-\mu)f(\overline{X})+\mu\overline{A}$,
and hence $\chi(f(\overline{X}))^2=(t-\mu)+(\lambda-\mu)\chi(f(\overline{X}))$.
Thus we also can deduce that $\chi(f(\overline{X}))\in\{|X|-|Y|,(\lambda-\mu)-(|X|-|Y|)\}$. This proves (i).

Now suppose  $\Gamma':=\mathrm{Cay}(A,X\sqcup f(X))$. According to (i), for any non-trivial $\chi\in \mathrm{Irr}(A)$, we have $\chi(\overline{X}+f(\overline{X}))\in  \left\{\lambda-\mu, 2(|X|-|Y|), 2[(\lambda-\mu)-(|X|-|Y|)]\right\}$. 
By Lemma \ref{lem::3} and by the arbitrariness of $\chi$, we conclude that all possible eigenvalues of  $\Gamma'$ other than  $2|X|$ are $\lambda-\mu$, $2(|X|-|Y|)$ and $2[(\lambda-\mu)-(|X|-|Y|)]$.   Since $\Gamma'$ has only integer eigenvalues, by Corollary \ref{cor::1}, $X\sqcup f(X)$ is inverse closed, and hence $\Gamma'$ is undirected. Moreover, if $\chi(\overline{Y})\neq 0$ for all non-trivial $\chi\in \mathrm{Irr}(A)$, then the above arguments imply that  $\Gamma'$ has exactly two distinct eigenvalues $2|X|$  (with multiplicity $1$)  and $\lambda-\mu$.  Thus (ii) follows.
\end{proof}

\begin{remark}\label{rem::1}
\emph{It is worth mentioning that all Cayley multigraphs over abelian groups with only integer eigenvalues have been charaterized in \cite{DKMA13}. Let $A$ be an abelian group. For any $x\in A$, let $\mathcal{O}_x=\{y\in A\mid \langle y \rangle=\langle x \rangle\}$.
Then a Cayley multigraph $\mathrm{Cay}(A,S)$  has only integer eigenvalues if and only if the multi-subset $S$ can be expressed as a non-negative integer combination of $\mathcal{O}_x$ for $x\in A$. Therefore, by Theorem \ref{thm::1}, if the Cayley graph $\Gamma$ is a DSRG, then 
$\overline{X}+f(\overline{X})=\sum_{x\in A}m_x\overline{\mathcal{O}_x}$,
where $m_x\in \{0,1,2\}$ for all $x\in A$.
}
\end{remark}

As an application of Theorem \ref{thm::1}, we obtain the following result.

\begin{corollary}\label{cor::2}
If the Cayley graph $\Gamma$ is a DSRG, then one of the following statements holds:
\begin{enumerate} [(i)]
    \item there exists some non-trivial $\chi\in \mathrm{Irr}(G)$ such that $\chi(\overline{Y})=0$;
    \item $X\cap f(X)=\emptyset$, $X\cup f(X)=A\setminus\{e\}$, and 
    $\frac{n-\sqrt{2n-1}}{2}<|Y|<\frac{n+\sqrt{2n-1}}{2}$.
\end{enumerate}
In particular, if $Y\in\left\{\{e\},A\setminus\{e\}\right\}$ and $n\ge 5$, then  $\Gamma$ cannot be a DSRG. 
\end{corollary}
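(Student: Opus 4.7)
The plan is to establish the contrapositive: assume (i) fails, so $\chi(\overline{Y})\ne 0$ for every non-trivial $\chi\in\mathrm{Irr}(A)$, and derive the conditions in (ii). By Theorem~\ref{thm::1}(ii) the Cayley multigraph $\Gamma':=\mathrm{Cay}(A,X\sqcup f(X))$ is undirected, $2|X|$-regular, and all its eigenvalues other than $2|X|$ equal $\lambda-\mu$. Translating this spectral data into $\mathbb{C}A$ via Lemma~\ref{lem::2} yields
\[
\overline{X}+f(\overline{X}) \;=\; (\lambda-\mu)\,e + \frac{2|X|-(\lambda-\mu)}{n}\,\overline{A}.
\]
Since $e\notin X\sqcup f(X)$, the $e$-coefficient on the right must vanish, forcing $\lambda-\mu=-\frac{2|X|}{n-1}$; consequently every $a\in A\setminus\{e\}$ has the same multiplicity $m:=\frac{2|X|}{n-1}$ in $X\sqcup f(X)$. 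Because $X\subseteq A\setminus\{e\}$ is a \emph{set} and $f^2=\mathrm{id}$, we have $m\in\{0,1,2\}$.

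I would next rule out $m=0$ and $m=2$. If $m=0$ then $X=\emptyset$, so \eqref{equ::3} becomes $0=(\lambda-\mu)\overline{Y}+\mu\overline{A}$; applying a non-trivial $\chi$ with $\chi(\overline{Y})\ne 0$ forces $\lambda=\mu$, and the trivial character then gives $\mu=0$. Equation \eqref{equ::2} collapses to $\overline{Y}f(\overline{Y})\alpha=te$, whose trivial-character image is $t=|Y|^2\ge|Y|=k$, contradicting $t<k$. If $m=2$ then $X=f(X)=A\setminus\{e\}$, so $\overline{X}=\overline{A}-e$ and $\overline{X}^{2}=(n-2)\overline{A}+e$; plugging these into \eqref{equ::3} together with $\overline{A}\,\overline{Y}=|Y|\,\overline{A}$ yields $\lambda-\mu=-2$ and $\mu=2|Y|$, and then the trivial-character image of \eqref{equ::2} produces
\[
t-k \;=\; |Y|^{2}-(2n-1)|Y|+n(n-1) \;=\; (|Y|-n)\bigl(|Y|-(n-1)\bigr),
\]
which is nonnegative for every integer $|Y|\le n-1$, again contradicting $t<k$.

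Therefore $m=1$, i.e., exactly one of $a$ and $f(a)$ belongs to $X$ for every $a\in A\setminus\{e\}$, so $X\cap f(X)=\emptyset$ and $X\cup f(X)=A\setminus\{e\}$; in particular $|X|=(n-1)/2$ and $\lambda-\mu=-1$. Applying the trivial character to \eqref{equ::3} now forces $\mu=|Y|$, hence $\lambda=|Y|-1$. Substituting these into identity \eqref{equ::5} from the proof of Theorem~\ref{thm::1} gives
\[
t \;=\; \Bigl(|Y|-\tfrac{n-1}{2}\Bigr)^{\!2} + \tfrac{n-1}{2},
\]
and the DSRG requirement $t<k=\tfrac{n-1}{2}+|Y|$ reduces, after cancellation, to $|Y|^{2}-n|Y|+\tfrac{(n-1)^{2}}{4}<0$, whose solution set is precisely the open interval claimed in (ii).

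For the concluding assertion, if $Y\in\{\{e\},A\setminus\{e\}\}$ then $|\chi(\overline{Y})|=1$ for every non-trivial $\chi\in\mathrm{Irr}(A)$, so (i) fails and (ii) must hold with $|Y|\in\{1,n-1\}$; in either case the interval condition reduces to $n-2<\sqrt{2n-1}$, equivalently $(n-1)(n-5)<0$, which is false for $n\ge 5$, so $\Gamma$ cannot be a DSRG. I expect the chief technical obstacle to be the elimination of the $m=2$ case: one must carefully expand $(\overline{A}-e)^{2}$ in \eqref{equ::2}, keep both the $e$- and $\overline{A}$-coefficients straight in combination with \eqref{equ::3}, and only after these manipulations does $t-k$ factor cleanly in the form displayed above.
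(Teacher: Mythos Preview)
Your proof is correct and follows the same overall strategy as the paper: invoke Theorem~\ref{thm::1}(ii) to pin down $\mathrm{Spec}(\Gamma')$, deduce that $X\sqcup f(X)$ is a constant multiple $m\in\{0,1,2\}$ of $A\setminus\{e\}$, and then eliminate $m\ne 1$. The execution differs in two places worth noting. First, where the paper argues that the underlying graph of $\Gamma'$ must be complete and then appeals to Lemma~\ref{lem::3} to obtain $\overline{X}+f(\overline{X})\in\{\overline{A}-e,\,2(\overline{A}-e)\}$, you instead use Lemma~\ref{lem::2} (Fourier inversion) to write $\overline{X}+f(\overline{X})$ down explicitly; this is slightly more direct and also makes the degenerate case $m=0$ (i.e.\ $X=\emptyset$) visible and easy to dispose of, whereas the paper does not address it. Second, for $m=2$ the paper simply asserts that ``$\Gamma$ is undirected, and hence $t=k$'', while you compute $t-k=(|Y|-n)(|Y|-(n-1))\ge 0$ directly from \eqref{equ::2}--\eqref{equ::3}; your computation in fact supplies the missing justification for the paper's assertion, since the hypothesis $X=f(X)=A\setminus\{e\}$ by itself places no constraint on $Y$ and hence does not make $S=S^{-1}$ automatic.
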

\begin{proof}Suppose that the Cayley graph $\Gamma$ is a DSRG with parameters $(2n,k,\mu,\lambda,t)$, where $0<t<k$. Assume that  $\chi(\overline{Y})\neq 0$ for all non-trivial $\chi\in \mathrm{Irr}(A)$. By Theorem \ref{thm::1}, the Cayley multigraph $\Gamma':=\mathrm{Cay}(A,X\sqcup f(X))$ is undirected, and has exactly two distinct eigenvalues $2|X|$ (with multiplicity $1$) and $\lambda-\mu$. This implies that the underlying graph of $\Gamma'$, which is the simple graph obtained from $\Gamma'$ by deleting all possible parallel edges, must be complete. Then, by Lemma \ref{lem::3}, one can easily deduce that $\overline{X}+f(\overline{X})=\overline{A}-e$ or $2(\overline{A}-e)$. 

If $\overline{X}+f(\overline{X})=\overline{A}-e$, then we must have $X\cap f(X)=\emptyset$, $X\sqcup f(X)=X\cup f(X)=A\setminus\{e\}$, $|X|=(n-1)/2$ and $\lambda-\mu=-1$. Combining this with \eqref{equ::3} yields that $\mu=|Y|$ and $\lambda=|Y|-1$. Then, by applying the trivial character to both sides of \eqref{equ::2}, we get
$t=|X|(|X|+1)-|Y|(n-1-|Y|)$. Also note that $t<k=|X|+|Y|$. Thus we have $|X|^2<|Y|(n-|Y|)$, and it follows that
$\frac{n-\sqrt{2n-1}}{2}<|Y|<\frac{n+\sqrt{2n-1}}{2}$ because $|X|=(n-1)/2$.

If $\overline{X}+f(\overline{X})=2(\overline{A}-e)$, then  $X=f(X)=A\setminus\{e\}$. In this situation, $\Gamma$ is undirected, and hence $t=k$, contrary to our assumption that $t<k$. 

Therefore, we conclude that one of the statements in (i) and (ii) holds.  

Now suppose that $Y\in\left\{\{e\},A\setminus\{e\}\right\}$. Clearly, $\chi(\overline{Y})\in \{1,-1\}$ for all non-trivial $\chi\in \mathrm{Irr}(A)$. If $\Gamma$ is a DSRG, then from the above arguments we obtain $\frac{n-\sqrt{2n-1}}{2}<|Y|<\frac{n+\sqrt{2n-1}}{2}$. However, this is impossible because  $|Y|\in\{1,n-1\}$ and $n\geq 5$. 
\end{proof}

According to Theorem \ref{thm::1}, if $|X|=|Y|$, then   $\chi(\overline{X}+f(\overline{X}))$ has only three possible values   $0$, $\lambda-\mu$ and $2(\lambda-\mu)$, for all non-trivial $\chi\in\mathrm{Irr}(A)$.  Based on this observation,  we give a sufficient and necessary condition for the Cayley graph $\Gamma$ being a DSRG under certain assumptions.

\begin{theorem}\label{thm::2}
Assume that $X\cap f(X)=Y\cap f(Y)=\emptyset$ and $|X|=|Y|$. Let  $B=A\setminus(X\cup f(X))$. Then the Cayley graph $\Gamma$ is a DSRG if and only if the following conditions hold:
\begin{enumerate}[(i)]
    \item $B$ is a subgroup of $A$,
    \item both $X$ and $Y$ are a union of some cosets of $B$ in $A$,
    \item $X\cup f(X)=Y\cup f(Y)$ and $\overline{X}f(\overline{X})=\overline{Y}f(\overline{Y})$.
\end{enumerate}
In this case, $\Gamma$ has the parameters $(2n,n-\ell,\frac{n-\ell}{2},\frac{n-3\ell}{2},\frac{n-\ell}{2})$, where $\ell=|B|$.
\end{theorem}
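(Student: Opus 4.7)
The plan is to prove the two implications separately and then read off the parameters; Theorem~\ref{thm::1}, Lemma~\ref{lem::5}, and the character theory of the abelian group $A$ are the main tools. One observation will be used repeatedly: since $f(\alpha) = \alpha$, any element of $X$ equal to $\alpha$ would also lie in $f(X)$, so $X \cap f(X) = \emptyset$ forces $\alpha \notin X \cup f(X)$, i.e., $\alpha \in B$. Combined with $Y$ being a union of cosets of $B$ (once (ii) is in place), this yields $\alpha \overline{Y} = \overline{Y}$, which is precisely what absorbs the twist by $\alpha$ in \eqref{equ::2} and produces the clean form of (iii).

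For necessity, assume $\Gamma$ is a DSRG. The hypothesis $X \cap f(X) = \emptyset$ makes $\Gamma' := \mathrm{Cay}(A, X \sqcup f(X))$ a simple undirected graph, and Theorem~\ref{thm::1}(ii) together with $|X|=|Y|$ restricts its non-principal eigenvalues to $\{0,\lambda-\mu,2(\lambda-\mu)\}$ while keeping $2|X|$ simple, so $\Gamma'$ is connected. Comparing the trivial-character images of \eqref{equ::2} and \eqref{equ::3} gives $t = \mu$, after which the trace relation $0 = 2|X| + \sum(\text{other eigenvalues})$ rules out $\lambda - \mu \ge 0$. The second-largest eigenvalue of $\Gamma'$ is therefore at most $0$, so Lemma~\ref{lem::5} implies $\Gamma'$ is complete multipartite; for a Cayley graph on the abelian group $A$, the parts must form the cosets of a subgroup, which is exactly $B$. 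This proves (i) and yields $\overline{X} + f(\overline{X}) = \overline{A} - \overline{B}$ together with $2|X| = n - \ell$ for $\ell := |B|$.

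Next I extract condition (ii) and pin $\lambda - \mu = -\ell$ using Theorem~\ref{thm::1}(i). Because $0 < |Y| < n$, $\overline{Y}$ cannot be a scalar multiple of $\overline{A}$, so some non-trivial $\chi \in \mathrm{Irr}(A)$ has $\chi(\overline{Y}) \ne 0$; for this $\chi$, Theorem~\ref{thm::1}(i) gives $\lambda - \mu = \chi(\overline{A} - \overline{B}) \in \{0, -\ell\}$, and negativity forces $\lambda - \mu = -\ell$. For any non-trivial $\chi$ with $\chi|_B \ne 1$ we have $\chi(\overline{X} + f(\overline{X})) = 0$, and Theorem~\ref{thm::1}(i) now forces $\chi(\overline{Y}) = 0$ (otherwise $\lambda - \mu = 0$) as well as $\chi(\overline{X}) = \chi(f(\overline{X})) = 0$ (the only way two elements of $\{0,-\ell\}$ sum to $0$). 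Lemma~\ref{lem::2} then translates these vanishings into $X$ and $Y$ being unions of cosets of $B$, proving (ii). For (iii), the inclusion $B \subseteq Y$ would contradict $Y \cap f(Y) = \emptyset$, so $Y \cap B = \emptyset$ and a size count yields $X \cup f(X) = Y \cup f(Y) = A \setminus B$; for the second half, I would eliminate $\overline{X}^2$ in \eqref{equ::2} using the identity $\overline{X}^2 = |X|\overline{A} - \ell \overline{X} - \overline{X} f(\overline{X})$ (which follows from multiplying $\overline{X}$ into $\overline{X}+f(\overline{X}) = \overline{A}-\overline{B}$); after using $\mu = |X|$, this collapses to $\overline{X} f(\overline{X}) = \alpha \overline{Y} f(\overline{Y})$, and the opening observation makes the $\alpha$ disappear.

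Sufficiency reverses the same algebraic manipulations: from (i)--(iii) one immediately has $\overline{X} + f(\overline{X}) = \overline{Y} + f(\overline{Y}) = \overline{A} - \overline{B}$, $|X| = |Y| = (n-\ell)/2$, and $\alpha \in B$, and substituting the stated parameters into \eqref{equ::2} and \eqref{equ::3} via Lemma~\ref{lem::6} is a direct expansion. I expect the subtlest step to be the character argument that simultaneously kills $\chi(\overline{X})$ and $\chi(\overline{Y})$ on characters non-trivial on $B$, upgrading Theorem~\ref{thm::1}'s necessary conditions into the rigid coset-union structure of (ii); by comparison, the $\alpha$-absorption that yields the clean equality in (iii) is conceptually simple but easy to overlook, and the parameter count $(2n,n-\ell,(n-\ell)/2,(n-3\ell)/2,(n-\ell)/2)$ then drops out from $k = |X|+|Y|$, $t = \mu = |X|$, and $\lambda = \mu - \ell$.
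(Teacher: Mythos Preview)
Your proof is correct, and the overall architecture matches the paper's: use Theorem~\ref{thm::1} to pin down the spectrum of $\Gamma'=\mathrm{Cay}(A,X\cup f(X))$, force $\lambda<\mu$ via the trace, conclude $\Gamma'$ is complete multipartite so $B$ is a subgroup, and then establish (ii) and (iii). The sufficiency direction and the parameter count are handled identically.

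Where you diverge is in the derivation of $\lambda-\mu=-\ell$ and of condition~(ii). The paper argues purely inside $\mathbb{C}A$: it subtracts \eqref{equ::6} from its $f$-image \eqref{equ::7} to obtain $(\overline{A}-\overline{B})(f(\overline{X})-\overline{X})=(\lambda-\mu)(f(\overline{X})-\overline{X})$, picks a character separating $\overline{X}$ from $f(\overline{X})$ to read off $|B|=\mu-\lambda$, and then manipulates this identity (using $\overline{B}^2=|B|\,\overline{B}$, $\overline{A}\,\overline{B}=|B|\,\overline{A}$, etc.) to isolate $\overline{B}\,\overline{X}=|B|\,\overline{X}$ and, via \eqref{equ::3}, $\overline{B}\,\overline{Y}=|B|\,\overline{Y}$. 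You instead stay on the character side: you locate a non-trivial $\chi$ with $\chi(\overline{Y})\ne 0$ and use Theorem~\ref{thm::1}(i) together with $\chi(\overline{B})\in\{0,\ell\}$ to get $\lambda-\mu=-\ell$; then, for every $\chi$ non-trivial on $B$, Theorem~\ref{thm::1}(i) forces $\chi(\overline{Y})=0$ and $\chi(\overline{X})=\chi(f(\overline{X}))=0$, which via Lemma~\ref{lem::2} is equivalent to $\overline{B}\,\overline{X}=\ell\,\overline{X}$ and $\overline{B}\,\overline{Y}=\ell\,\overline{Y}$. Your route leans more heavily on the dichotomy in Theorem~\ref{thm::1}(i) and avoids the explicit group-algebra computations \eqref{equ::8}--\eqref{equ::9}; the paper's route is more self-contained algebraically and makes the coset structure visible through the single identity $\overline{B}\,\overline{X}=|B|\,\overline{X}$. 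Both are clean; yours perhaps better showcases why Theorem~\ref{thm::1} was worth proving. Two minor points: you should derive $\mu=|X|$ explicitly (it follows from the trivial-character image of \eqref{equ::3} together with $2|X|+\ell=n$), and the assertion that $2|X|$ is simple is cleanest \emph{after} you know $\lambda-\mu<0$, not before.
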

\begin{proof}
Clearly, $f(B)=B$. First assume that $\Gamma$ is a DSRG with parameters $(2n,k,\mu,\lambda,t)$. As $|X|=|Y|$, by applying the trivial character to both sides of  \eqref{equ::2} and \eqref{equ::3}, we obtain
$2|X|^2=(t-\mu)+(\lambda-\mu)|X|+\mu n$ and $2|X|^2=(\lambda-\mu)|X|+\mu n$, respectively.  Then $t=\mu$, and \eqref{equ::2} becomes 
\begin{equation}\label{equ::6}
\overline{X}^2+\overline{Y}f(\overline{Y})\alpha=(\lambda-\mu)\overline{X}+\mu\overline{A}.
\end{equation}
Recall that $f$ is an automorphism of $A$ with order $2$ such that $f(\alpha)=\alpha$. By applying $f$ to both sides of \eqref{equ::6}, we obtain 
\begin{equation}\label{equ::7}
f(\overline{X})^2+\overline{Y}f(\overline{Y})\alpha=(\lambda-\mu)f(\overline{X})+\mu\overline{A}.
\end{equation}
Let $\Gamma':=\mathrm{Cay}(A,X\cup f(X))$. As $X\cap f(X)=\emptyset$, by Theorem \ref{thm::1}, $\Gamma'$ is an undirected Cayley graph, i.e., $X\cup f(X)$ is inverse closed, and all possible eigenvalues of $\Gamma'$ are  $2|X|$ (with multiplicity $1$), $0$, $\lambda-\mu$ and $2(\lambda-\mu)$.  Moreover, since the sum of all eigenvalues of $\Gamma'$ is equal to $0$, we have $\lambda<\mu$, and hence the second largest eigenvalue of $\Gamma'$ is at most $0$. This implies that  $\Gamma'$ is exactly a complete multipartite graph. We shall prove that $B=A\setminus (X\cup f(X))$ is a subgroup of $A$. Let $b_1,b_2\in B$. If $e\in \{b_1,b_2\}$, then $b_2b_1^{-1}\in B$ because $B^{-1}=B$ due to $X\cup f(X)$ is inverse closed. If $e\notin\{b_1,b_2\}$, then $b_1$ and $b_2$ are not adjacent, since otherwise $K_1\cup K_2$ would be an induced subgraph of $\Gamma'$, which is impossible. Then $b_2b_1^{-1}\notin X\cup f(X)$, and so $b_2b_1^{-1}\in B$. By the arbitrariness of $b_1,b_2\in B$, we conclude that $B$ is a subgroup of $A$. Since $B=A\setminus(X\cup f(X))$ and $X\cap f(X)=\emptyset$, from \eqref{equ::6} and \eqref{equ::7} we obtain 
\begin{equation}\label{equ::8}
    (\overline{A}-\overline{B})(f(\overline{X})-\overline{X})=(\overline{X}+f(\overline{X}))(f(\overline{X})-\overline{X})=(\lambda-\mu)(f(\overline{X})-\overline{X}).
\end{equation} 
As $X\neq f(X)$, by Lemma \ref{lem::2}, there exists some nontrivial character $\chi\in\mathrm{Irr}(A)$ such that $\chi(\overline{X})\neq \chi(f(\overline{X}))$. By applying $\chi$ to both sides of \eqref{equ::8},  we obtain $\chi(\overline{B})=\mu-\lambda$. On the other hand, since $B$ is a subgroup of $A$, we have  $\overline{B}^2=|B|\overline{B}$ and $\chi(\overline{B})^2=|B|\chi(\overline{B})$. Thus $|B|=\mu-\lambda$ due to $\chi(\overline{B})=\mu-\lambda\neq 0$. Recall that $f(\overline{X})=\overline{A}-\overline{B}-\overline{X}$. Again by \eqref{equ::8}, 
\begin{equation}\label{equ::9}
    (\overline{A}-\overline{B})(\overline{A}-\overline{B}-2\overline{X})=(\lambda-\mu)(\overline{A}-\overline{B}-2\overline{X}).
\end{equation} 
Combining this with  $\overline{A}^2=n\overline{A}$, $\overline{B}^2=|B|\cdot\overline{B}$, $\overline{A}\cdot \overline{B}=|B|\cdot\overline{A}$, $\overline{A}\cdot \overline{X}=|X|\cdot \overline{A}$, $|B|=\mu-\lambda$ and $2|X|+|B|=n$ yields that
$$
\overline{B}\cdot\overline{X}=|B|\cdot \overline{X}.
$$
Therefore, we may conclude that $X$  is a union of some cosets of $B$ in $A$. Furthermore, from $2|X|^2=(\lambda-\mu)|X|+\mu n$, $|B|=\mu-\lambda$ and $2|X|+|B|=n$ we can deduce that $|X|=\mu$. Then, by \eqref{equ::3} and the fact that $\overline{X}+f(\overline{X})=\overline{A}-\overline{B}$, we have
$$
(\overline{A}-\overline{B})\overline{Y}=(\lambda-\mu)\overline{Y}+\mu\overline{A}.
$$
Combining this with $\overline{A}\cdot \overline{Y}=|Y|\overline{A}=|X|\overline{A}=\mu\overline{A}$, we obtain
$$
\overline{B}\cdot\overline{Y}=(\mu-\lambda)\overline{Y}=|B|\cdot \overline{Y}.
$$
This implies that $Y$ is a union of some cosets of $B$ in $A$. As $f(B)=B$ and $Y\cap f(Y)=\emptyset$,  we have $B\cap (Y\cup f(Y))=\emptyset$. Then it follows from $|Y|+|f(Y)|=2|Y|=2|X|=n-|B|$ that $Y\cup f(Y)=A\setminus B=X\cup f(X)$.  Recall that $f(\alpha)=\alpha$. We have $\alpha\in B$, and so $\overline{Y}\alpha=\overline{Y}$. Then
\eqref{equ::6} becomes 
$$
\overline{X}(\overline{A}-\overline{B}-f(\overline{X}))+\overline{Y}f(\overline{Y})=(\lambda-\mu)\overline{X}+\mu \overline{A},
$$
or equivalently, 
$$
\overline{X}f(\overline{X})=\overline{Y}f(\overline{Y}).
$$
Therefore, we conclude that (i)--(iii) hold. Moreover, from $|X|=\mu=\frac{n-|B|}{2}$ and $|B|=\mu-\lambda$ we deduce that $\lambda=\frac{n-3|B|}{2}$. Hence, $\Gamma$ has the parameters $(2n,n-\ell,\frac{n-\ell}{2},\frac{n-3\ell}{2},\frac{n-\ell}{2})$, where $\ell=|B|$. 

Conversely, if (i)--(iii) hold, it is routine to verify that $X$ and $Y$ satisfy the equations in \eqref{equ::2} and \eqref{equ::3} with $t=\mu=\frac{n-\ell}{2}$ and $\lambda=\frac{n-3\ell}{2}$, where $\ell=|B|$. Therefore, by Lemma \ref{lem::6}, $\Gamma$ is a DSRG.
\end{proof}

Assume that $Y\in\{X,f(X)\}$ and $X\cap f(X)=\emptyset$. Since $f(B)=B$, we see that $X$ is a union of some cosets of $B$ in $A$ if and only if $f(X)$ is a union of some cosets of $B$ in $A$. Then from Theorem \ref{thm::2} we obtain the following corollary immediately.

\begin{corollary}\label{cor::3}
Assume that $Y\in\{X,f(X)\}$ and $X\cap f(X)=\emptyset$. Let $B=A\setminus(X\cup f(X))$. Then the Cayley graph $\Gamma$ is a DSRG if and only if $B$ is a subgroup of $A$, and $X$ is a union of some cosets of $B$ in $A$. In this case, $\Gamma$ has the parameters $(2n,n-\ell,\frac{n-\ell}{2},\frac{n-3\ell}{2},\frac{n-\ell}{2})$, where $\ell=|B|$.
\end{corollary}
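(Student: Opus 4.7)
The plan is to view this as a direct specialization of Theorem \ref{thm::2}, so the work amounts to checking that the hypotheses of that theorem are automatically fulfilled and that conditions (i)--(iii) there collapse to the single requirement stated here.

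First I would verify the ambient hypotheses of Theorem \ref{thm::2}, namely $|X|=|Y|$ and $Y\cap f(Y)=\emptyset$. The equality $|X|=|Y|$ is immediate since $|f(X)|=|X|$. For the intersection condition, recall that $f$ has order $2$, so if $Y=X$ then $Y\cap f(Y)=X\cap f(X)=\emptyset$, while if $Y=f(X)$ then $f(Y)=f(f(X))=X$ and again $Y\cap f(Y)=f(X)\cap X=\emptyset$. Thus Theorem \ref{thm::2} applies.

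Next I would check that condition (iii) of Theorem \ref{thm::2} is automatic under the hypothesis $Y\in\{X,f(X)\}$. The equality $X\cup f(X)=Y\cup f(Y)$ is clear in both cases. For the product identity, if $Y=X$ it is trivial; if $Y=f(X)$, then using that $f$ is an involutory automorphism and that $\mathbb{C}A$ is commutative,
\[
\overline{Y}f(\overline{Y})=f(\overline{X})\cdot f(f(\overline{X}))=f(\overline{X})\cdot\overline{X}=\overline{X}f(\overline{X}).
\]

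Finally I would reduce conditions (i) and (ii) of Theorem \ref{thm::2} to the statement of the corollary. Condition (i), that $B$ is a subgroup of $A$, is retained verbatim. For condition (ii), one must show that ``$X$ is a union of cosets of $B$'' and ``$Y$ is a union of cosets of $B$'' are equivalent under our hypotheses. Since $f(B)=B$, applying $f$ to any union of cosets of $B$ yields another union of cosets of $B$, so $X$ is a union of cosets of $B$ if and only if $f(X)$ is, and the claim follows because $Y\in\{X,f(X)\}$. Putting everything together, Theorem \ref{thm::2} gives that $\Gamma$ is a DSRG if and only if $B$ is a subgroup of $A$ and $X$ is a union of cosets of $B$, with the parameters read directly from the last line of Theorem \ref{thm::2}. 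There is no genuine obstacle here; the only thing requiring care is the $Y=f(X)$ case in verifying (iii), which reduces to using $f^2=\mathrm{id}$ together with commutativity in $\mathbb{C}A$.
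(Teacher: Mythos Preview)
Your proposal is correct and follows exactly the route the paper takes: the corollary is obtained ``immediately'' from Theorem~\ref{thm::2}, the only point singled out in the paper being that $f(B)=B$ makes ``$X$ is a union of cosets of $B$'' equivalent to ``$f(X)$ is a union of cosets of $B$''. Your write-up simply spells out the remaining routine verifications (the hypotheses $|X|=|Y|$, $Y\cap f(Y)=\emptyset$, and condition~(iii)) that the paper leaves implicit.
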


\begin{remark}\label{rem::2}
\emph{
If $G$ is a dihedral group, i.e., $G=\langle A,\beta\rangle$ with 
$A=\langle a\mid a^n=1\rangle$, $\beta^2=\alpha=e$ and $\beta^{-1}a\beta=f(a)=a^{-1}$, then  Corollary \ref{cor::3} determines all  Cayley DSRGs of the form  $\mathrm{Cay}(G,X\cup X\beta)$, where $X\subseteq A\setminus\{e\}$ and $X\cap X^{-1}=\emptyset$. This concides with the result of He and Zhang in \cite[Theorem 6.10]{HZ19}.}
\end{remark}

Under the assumption of Theorem \ref{thm::2}, if $\Gamma$ is a DSRG, then its parameters are of the form $(2n,n-\ell,\frac{n-\ell}{2},\frac{n-3\ell}{2},\frac{n-\ell}{2})$. Naturally, we propose the following problem.

\begin{problem}\label{prob::1}
For any fixed positive integers $n$ and $\ell$ with $n-\ell$ being even, determine all Cayley DSRGs over $G$ with parameters $(2n,n-\ell,\frac{n-\ell}{2},\frac{n-3\ell}{2},\frac{n-\ell}{2})$.
\end{problem}

For $\ell=1$, we can give an answer to  Problem \ref{prob::1} immediately.  
\begin{theorem}\label{thm::3}
Let $n\geq 3$ be an odd number. The Cayley graph $\Gamma$ is a DSRG with parameters $(2n,n-1,\frac{n-1}{2},\frac{n-3}{2},\frac{n-1}{2})$ if and only if the following conditions hold:
\begin{enumerate}[(i)]
    \item $|X|=|Y|=\frac{n-1}{2}$,
    \item $X\cap f(X)=\emptyset$ and $X\cup f(X)=A\setminus \{e\}$,
    \item $\alpha=e$,
    \item $\overline{X}f(\overline{X})=\overline{Y}f(\overline{Y})$.
\end{enumerate}
\end{theorem}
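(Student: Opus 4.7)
The plan is to derive conditions (i)--(iv) from the DSRG hypothesis by combining Lemma \ref{lem::6}, Theorem \ref{thm::1}, and a character-theoretic algebraic-integer argument, and then to verify the converse by direct substitution; throughout, the specified parameters satisfy $t-\mu=0$ and $\lambda-\mu=-1$. To get (i), I would apply the trivial character of $A$ to \eqref{equ::2} and \eqref{equ::3} to obtain $|X|^2+|Y|^2+|X|=\tfrac{n(n-1)}{2}$ and $2|X||Y|+|Y|=\tfrac{n(n-1)}{2}$; subtracting yields $(|X|-|Y|)(|X|-|Y|+1)=0$, and combined with $|X|+|Y|=n-1$ and the parity of $n$ this pins down $|X|=|Y|=\tfrac{n-1}{2}$. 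For (ii), I would invoke Theorem \ref{thm::1}(ii), which tells us that $\Gamma':=\mathrm{Cay}(A,X\sqcup f(X))$ is undirected with all non-principal eigenvalues in $\{-1,0,-2\}$. Writing $d=|X\cap f(X)|$ and letting $m_{-1},m_0,m_{-2}$ denote the corresponding multiplicities, the trace identities $\mathrm{tr}(A_{\Gamma'})=0$ and $\mathrm{tr}(A_{\Gamma'}^{2})=n\sum_g\Delta_{X\sqcup f(X)}(g)^2=n(n-1+2d)$ (using inverse-closedness of $X\sqcup f(X)$) give $m_{-1}+2m_{-2}=n-1$ and $(n-1)^2+m_{-1}+4m_{-2}=n(n-1+2d)$; subtracting yields $m_{-2}=nd$, and the nonnegativity $m_{-1}=n-1-2nd\geq 0$ forces $d=0$. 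Hence $X\cap f(X)=\emptyset$ and $X\cup f(X)=A\setminus\{e\}$, completing (ii).

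For (iii), observe that (ii) yields $\overline{X}+f(\overline{X})=\overline{A}-e$, so every non-trivial $\chi\in\mathrm{Irr}(A)$ satisfies $\chi(\overline{X})+\hat\chi(\overline{X})=-1$, where $\hat\chi:=\chi\circ f$. The key observation is that if some non-trivial $\chi$ were $f$-fixed, then $2\chi(\overline{X})=-1$ would force $\chi(\overline{X})=-\tfrac{1}{2}$, contradicting that $\chi(\overline{X})$ is an algebraic integer (being a sum of roots of unity). Because $|A|=n$ is odd, the standard decomposition $A=A^f\times A^{-f}$ of an abelian group of odd order under an involutive automorphism identifies the $f$-fixed characters of $A$ with $\mathrm{Irr}(A^f)$; the absence of non-trivial $f$-fixed characters therefore forces $A^f=\{e\}$, and since $\alpha\in A^f$ we conclude $\alpha=e$. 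For (iv), substituting $\alpha=e$ and $f(\overline{X})=\overline{A}-e-\overline{X}$ into \eqref{equ::2} and using $\overline{X}(\overline{X}+f(\overline{X}))=\tfrac{n-1}{2}\overline{A}-\overline{X}$ reduces the equation to $\overline{Y}f(\overline{Y})=\overline{X}f(\overline{X})$.

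For sufficiency, assuming (i)--(iv) I would compute $\overline{X}^{2}+\overline{Y}f(\overline{Y})\alpha=\overline{X}(\overline{X}+f(\overline{X}))=\tfrac{n-1}{2}\overline{A}-\overline{X}$ and $\overline{X}\,\overline{Y}+\overline{Y}f(\overline{X})=\overline{Y}(\overline{A}-e)=\tfrac{n-1}{2}\overline{A}-\overline{Y}$, matching \eqref{equ::2} and \eqref{equ::3} at the required parameters, and then apply Lemma \ref{lem::6} to conclude that $\Gamma$ is a DSRG. The main obstacle will be step (iii): one must convert the algebraic-integrality obstruction on character values into a statement about $f$-fixed characters of $A$, and then exploit the $f$-action on the character group of an odd-order abelian group to extract the structural conclusion $\alpha=e$.
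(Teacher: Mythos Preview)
Your proposal is correct, and parts (i), (iv) and the converse match the paper. The interesting differences are in (ii) and (iii).

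For (ii), your trace-identity argument is genuinely different from, and cleaner than, the paper's. The paper proceeds via Lemma~\ref{lem::5}: it shows the underlying simple graph $\Gamma_1'=\mathrm{Cay}(A,X\cup f(X))$ is complete multipartite (so $B=A\setminus(X\cup f(X))$ is a subgroup), then observes that the ``complementary'' Cayley graph $\mathrm{Cay}(G,X_c\cup Y\beta)$ with $X_c=A\setminus(X\cup\{e\})$ is again a DSRG with the same parameters, which forces $H\cup\{e\}$ (where $H=X\cap f(X)$) to be a subgroup as well; a subsequent spectral count on $\Gamma'=\Gamma_1'+\Gamma_2'$ then yields a contradiction unless $H=\emptyset$. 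Your approach bypasses all of this: once Theorem~\ref{thm::1}(ii) pins the non-principal spectrum of $\Gamma'$ to $\{-1,0,-2\}$, the two trace equations $\mathrm{tr}(A_{\Gamma'})=0$ and $\mathrm{tr}(A_{\Gamma'}^2)=n(n-1+2d)$ immediately give $m_{-2}=nd$ and $m_{-1}=n-1-2nd\ge 0$, forcing $d=0$. This is shorter and avoids Lemma~\ref{lem::5} and the complement trick entirely.

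For (iii), your algebraic-integer/character argument is valid but considerably more elaborate than needed. The paper dispatches (iii) in one line: since $f(\alpha)=\alpha$, if $\alpha$ lay in $X$ (or $f(X)$) it would lie in $X\cap f(X)$, contradicting (ii); hence $\alpha\notin X\cup f(X)=A\setminus\{e\}$, so $\alpha=e$. Your route---showing no non-trivial $f$-fixed character exists, then invoking the odd-order decomposition $A=A^f\times A^{-f}$ to conclude $A^f=\{e\}$---reaches a strictly stronger conclusion (that $f$ has no non-trivial fixed points in $A$), but for the statement at hand the direct argument suffices.
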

\begin{proof}
First assume that $\Gamma$ is a DSRG with parameters $(2n,n-1,\frac{n-1}{2},\frac{n-3}{2},\frac{n-1}{2})$. According to \eqref{equ::2} and \eqref{equ::3}, we have
\begin{equation}\label{equ::10}
    \overline{X}^2+\overline{Y}f(\overline{Y})\alpha=-\overline{X}+\frac{n-1}{2}\overline{A},
\end{equation}
\begin{equation}\label{equ::11}
    \overline{X}\cdot \overline{Y}+\overline{Y}f(\overline{X})=-\overline{Y}+\frac{n-1}{2}\overline{A}.
\end{equation}
By applying the trivial character to both sides of \eqref{equ::10} and \eqref{equ::11}, we obtain
\begin{equation}\label{equ::12}
    |X|^2+|Y|^2=-|X|+\frac{n(n-1)}{2}~~\mbox{and}~~2|X| |Y|=-|Y|+\frac{n(n-1)}{2}.
\end{equation}
Hence,  $|X|=|Y|-1$ or $|X|=|Y|$. If $|X|=|Y|-1$, then from \eqref{equ::12} we obtain $|Y|=\frac{n}{2}$, contrary to the parity of $n$. Therefore, we must have $|X|=|Y|=\frac{n-1}{2}$ by \eqref{equ::12}. This proves (i).

Now consider (ii). Let $\Gamma'={\rm Cay}(A,X\sqcup f(X))$, $\Gamma_1'={\rm Cay}(A,X\cup f(X))$ and $\Gamma_2'={\rm Cay}(A,X\cap f(X))$. Note that $\Gamma_1'$ is the underlying graph of $\Gamma'$. Let $B=A\setminus(X\cup f(X))$ and $H=X\cap f(X)$. By Theorem \ref{thm::1},  the second largest eigenvalue of $\Gamma'$ is at most $0$. Then Lemma \ref{lem::5} implies that $\Gamma_1'$ is a complete multipartite graph, and hence $B$ is a subgroup of $A$ by a similar argument as in the proof of Theorem \ref{thm::2}. Suppose $|B|=l+1$. Then the spectrum of $\Gamma_1'$ is
\begin{equation}\label{equ::13}
    \mathrm{Spec}(\Gamma_1')=\left\{n-1-l,[0]^{\frac{ln}{l+1}},[-l-1]^{\frac{n}{l+1}-1}\right\}.
\end{equation}
On the other hand, let $X_c=A\setminus(X\cup e)$. According to \eqref{equ::10} and \eqref{equ::11}, we have
$$
\overline{X_c}^2+\overline{Y}f(\overline{Y})\alpha=-\overline{X_c}+\frac{n-1}{2}\overline{A}  ~~\mbox{and}~~
      \overline{X_c}\cdot \overline{Y}+\overline{Y}f(\overline{X_c})=-\overline{Y}+\frac{n-1}{2}\overline{A},
$$
which implies that ${\rm Cay}(G,X_c\cup Y\beta)$ is a DSRG with parameters $(2n,n-1,\frac{n-1}{2},\frac{n-3}{2},\frac{n-1}{2})$. By the above arguments,  the undirected Cayley graph ${\rm Cay}(A,X_c\cup f(X_c))$ is also a complete multipartite graph, and so $A\setminus(X_c\cup f(X_c))$ is a subgroup of $A$. Note that $X_c\cup f(X_c)=(A\setminus (X\cup \{e\}))\cup (A\setminus(f(X)\cup\{e\}))=A-(H\cup \{e\})$. Thus $H\cup \{e\}$ is a subgroup of $A$. Since $n-1=|X|+|f(X)|=|X\cup f(X)|+|X\cap f(X)|=|A|-|B|+|H|=n-(l+1)+|H|$,  we have $|H|=l$. If $H\neq \emptyset$, i.e., $l\geq 1$, we see that $\Gamma_2'=\mathrm{Cay}(A,H)$ is the union of $n/(l+1)$ copies of $\mathrm{Cay}(H\cup \{e\},H)\cong K_{l+1}$, and hence the spectrum of $\Gamma_2'$ is
\begin{equation}\label{equ::14}
\mathrm{Spec}(\Gamma_2')=\left\{[l]^{\frac{n}{l+1}},[-1]^{\frac{ln}{l+1}}\right\}.
\end{equation}
By Lemma \ref{lem::3}, each non-trivial eigenvalue of $\Gamma'$ is the summation of an eigenvalue of $\Gamma_1'$ (as shown in \eqref{equ::13}) and an eigenvalue of $\Gamma_2'$ (as shown in \eqref{equ::14}). Since $\Gamma'$ is an $(n-1)$-regular multigraph with the second largest eigenvalue at most $0$, we assert that the spectrum of $\Gamma'$ is of the form
$$\mathrm{Spec}(\Gamma')=\left\{n-1,[0]^{\frac{n}{l+1}-1},[-1]^{\frac{ln}{l+1}}\right\}.$$
By considering the trace of $A_{\Gamma'}$, we obtain $(n-1)-ln/(l+1)=0$, and hence  $l=n-1$. However, this implies that $|B|=l+1=n$ and $X=\emptyset$, which contradicts (i) due to $n\geq 3$. Therefore,  $H=X\cap f(X)=\emptyset$. Combining this with (i), we obtain  $X\cup f(X)=A\setminus\{e\}$. 

As $\alpha\not\in X\cup f(X)$,  from (ii) we immediately deduce that $\alpha=e$. Thus (iii) holds. According to (ii) and (iii), we see that \eqref{equ::10} becomes  
$$\overline{X}(\overline{A}-e-f(\overline{X}))+\overline{Y}f(\overline{Y})=-\overline{X}+\frac{n-1}{2}\overline{A}.$$
Combining this with $\overline{X}\cdot \overline{A}=\frac{n-1}{2}\overline{A}$ yields that $\overline{X}f(\overline{X})=\overline{Y}f(\overline{Y})$. This proves  (iv).

Conversely, if (i)-(iv) hold, it is routine to verify that $X$ and $Y$ satisfy \eqref{equ::10} and \eqref{equ::11}. Then, by  Lemma \ref{lem::6}, $\Gamma$ is a DSRG with parameters $(2n,n-1,\frac{n-1}{2},\frac{n-3}{2},\frac{n-1}{2})$.
\end{proof}

By Theorem \ref{thm::3}, we obtain the following corollary immediately.
\begin{corollary}\label{cor::4}
If $\alpha\ne e$, then there is no Cayley DSRG over $G$ with parameters $(2n,n-1,\frac{n-1}{2},\frac{n-3}{2},\frac{n-1}{2})$.
\end{corollary}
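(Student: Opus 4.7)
The plan is to argue by contraposition and reduce everything to Theorem \ref{thm::3}. Since $G=A\cup A\beta$, any connection set $S\subseteq G\setminus\{e\}$ decomposes uniquely as $S=X\cup Y\beta$ with $X\subseteq A\setminus\{e\}$ and $Y\subseteq A$, so every Cayley graph over $G$ is of the form $\Gamma=\mathrm{Cay}(G,X\cup Y\beta)$ considered throughout the section. Hence, to prove the corollary, it suffices to show that if $\Gamma$ is a DSRG with parameters $(2n,n-1,\frac{n-1}{2},\frac{n-3}{2},\frac{n-1}{2})$, then necessarily $\alpha=e$.

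The key observation is that Theorem \ref{thm::3} already lists a set of necessary (and sufficient) conditions for $\Gamma$ to be a DSRG with exactly these parameters, and item (iii) of that theorem is precisely the statement $\alpha=e$. Therefore, under the hypothesis $\alpha\ne e$, condition (iii) of Theorem \ref{thm::3} fails, so $\Gamma$ cannot be a DSRG with the stated parameter set. This yields the corollary by contraposition.

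There is no real obstacle here: the entire content of the corollary has been absorbed into Theorem \ref{thm::3}, and the proof is a one-line appeal to the necessity direction of that theorem. The only mild subtlety is checking that the hypothesis of Theorem \ref{thm::3} (namely, $n\ge 3$ odd, which is forced by the shape of the parameter tuple together with $|Y|=\frac{n-1}{2}$ being an integer and $n-1>0$) is implicit in the corollary's setting; since otherwise the parameter tuple is not even integral, the statement is vacuous in those cases.
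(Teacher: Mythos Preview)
Your proposal is correct and matches the paper's own approach: the paper simply states that the corollary follows immediately from Theorem~\ref{thm::3}, and your argument makes explicit that condition~(iii) of that theorem forces $\alpha=e$, so no such DSRG exists when $\alpha\ne e$.
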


Inspired by the conclusion of Corollary \ref{cor::3}, we now consider another interesting situation   $Y\in\{A\setminus X,A\setminus f(X)\}$. 

\begin{theorem}\label{thm::4}
Assume that $Y\in \{A\setminus X,A\setminus f(X)\}$ and $X\cap f(X)=\emptyset$. Let $B=A\setminus(X\cup f(X))$. Then the Cayley graph $\Gamma$ is a DSRG if and only if $B$ is a subgroup of $A$, and $X$ is a union of some cosets of $B$ in $A$. In this case, $\Gamma$ has the parameters $(2n,n,\frac{n+\ell}{2},\frac{n-\ell}{2},\frac{n+\ell}{2})$, where $\ell=|B|$. 
\end{theorem}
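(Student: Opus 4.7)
I will prove Theorem \ref{thm::4} by following the template of Theorem \ref{thm::2}: reduce equations \eqref{equ::2} and \eqref{equ::3} to a single group-algebra identity in $\mathbb{C}A$, extract the spectrum of the auxiliary Cayley graph $\Gamma':=\mathrm{Cay}(A,X\cup f(X))$, invoke Lemma \ref{lem::5} to conclude $\Gamma'$ is complete multipartite, and then read off that $B$ is a subgroup of $A$ and that $X$ is a union of cosets of $B$. Without loss of generality I may assume $Y=A\setminus X$; the case $Y=A\setminus f(X)$ reduces to this one by replacing $X$ with $f(X)$, which leaves $B$, $|X|$, and the hypothesis $X\cap f(X)=\emptyset$ unchanged. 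A useful preliminary observation is that $\alpha\in B$: since $f(\alpha)=\alpha$, having $\alpha\in X$ would force $\alpha\in f(X)$, contradicting disjointness, so $\alpha\notin X\cup f(X)$.

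For necessity, assume $\Gamma$ is a DSRG with parameters $(2n,k,\mu,\lambda,t)$. Substituting $\overline{Y}=\overline{A}-\overline{X}$ into \eqref{equ::2} and \eqref{equ::3} and simplifying via $\overline{X}\cdot\overline{A}=|X|\overline{A}$, $\overline{A}\cdot f(\overline{X})=|X|\overline{A}$, $\overline{A}^2=n\overline{A}$, and $\overline{A}\alpha=\overline{A}$, then subtracting the two resulting identities yields
\[\overline{X}f(\overline{X})(\alpha-e)=(t-\mu)e-(n-\mu-\lambda)\overline{A}.\]
The trivial character gives $k=n$ together with the scalar relations $(2|X|-n)[(2|X|-n)-(\lambda-\mu)]=t-\mu$ and $t-\mu=n(n-\mu-\lambda)$. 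I claim $t=\mu$. First, $X\neq\emptyset$, since otherwise $\Gamma=K_{n,n}$ would have $t=k$, violating $0<t<k$. If $t\ne\mu$, then applying any non-trivial $\chi\in\mathrm{Irr}(A)$ to the displayed equation forces $\chi(\overline{X})\ne 0$; combined with the character version of \eqref{equ::3}, this gives $\chi(\overline{X}+f(\overline{X}))=\lambda-\mu$ for every non-trivial $\chi$. By Lemma \ref{lem::2}, $\overline{X}+f(\overline{X})=(\lambda-\mu)e+\frac{2|X|-(\lambda-\mu)}{n}\overline{A}$; the coefficients lie in $\{0,1\}$ (with the $e$-coefficient zero) because $e\notin X\cup f(X)$ and $X\cap f(X)=\emptyset$, forcing $\lambda-\mu=-1$ and $|X|=(n-1)/2$. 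But then the first scalar relation gives $t-\mu=0$, a contradiction. Hence $t=\mu$, so $\mu+\lambda=n$, and the displayed equation collapses to $\overline{X}f(\overline{X})\alpha=\overline{X}f(\overline{X})$. With these identifications, \eqref{equ::2} and \eqref{equ::3} both reduce to
\[\overline{X}^2+\overline{X}f(\overline{X})=(2|X|-\lambda)\overline{A}+(\lambda-\mu)\overline{X}.\]

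Applying non-trivial characters to this equation together with its image under the involution $f$ shows that every non-trivial eigenvalue of the simple graph $\Gamma'$ lies in $\{0,\lambda-\mu\}$, and the trace condition (with $|X|>0$) yields $\lambda-\mu\le 0$, so the second-largest eigenvalue of $\Gamma'$ is at most $0$. Lemma \ref{lem::5} then implies $\Gamma'$ is complete multipartite, and the argument from the proof of Theorem \ref{thm::2} (closure of the $e$-part under the map $(b_1,b_2)\mapsto b_2b_1^{-1}$) forces $B=A\setminus(X\cup f(X))$ to be a subgroup of $A$. Substituting $\overline{X}+f(\overline{X})=\overline{A}-\overline{B}$ together with $|X|=(n-|B|)/2=\lambda$ and $\mu-\lambda=|B|$ into the last displayed equation yields $\overline{X}\cdot\overline{B}=|B|\overline{X}$, so $X$ is a union of cosets of $B$, and the stated parameters follow. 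Conversely, assuming $B$ is a subgroup, $\alpha\in B$, and $X$ is a union of cosets of $B$ (so $f(X)$ is as well, since $f(B)=B$), the identities $\overline{X}\cdot\overline{B}=|B|\overline{X}$, $\overline{X}\alpha=\overline{X}$, and $\overline{X}+f(\overline{X})=\overline{A}-\overline{B}$ reduce verification of \eqref{equ::2} and \eqref{equ::3} with the stated parameters to a routine calculation. The main obstacle is the character-theoretic case analysis needed to establish $t=\mu$; once that is secured, the remainder closely parallels the proof of Theorem \ref{thm::2}.
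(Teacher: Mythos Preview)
Your argument is correct, but it proceeds along a different route from the paper's proof. The paper substitutes $Y=A\setminus X$ into \eqref{equ::2}--\eqref{equ::3} to obtain its equations (15)--(16), then \emph{adds} (16) to its $f$-image to get a closed equation for $(\overline{X}+f(\overline{X}))^2$; comparing the coefficient of $e$ on both sides immediately yields $|X|=\lambda$, and Lemma~\ref{lem::1} recognizes $\Gamma'=\mathrm{Cay}(A,X\cup f(X))$ as an SRG with parameters $(n,2\lambda,3\lambda-\mu,2\lambda)$, hence complete multipartite. Only at the very end does the paper deduce $t=\mu$, by applying the trivial character to the difference of (15) and (16). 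You instead \emph{subtract} the two equations first, and use a character-theoretic contradiction argument to establish $t=\mu$ (and hence $\lambda+\mu=n$) up front; this collapses both equations to a single identity, from which you read off the eigenvalues of $\Gamma'$ directly and invoke Lemma~\ref{lem::5}. The paper's route is shorter because the SRG recognition via $\overline{U}^2$ bypasses your case analysis on $t\ne\mu$, while your route makes the role of $t=\mu$ structurally transparent and stays closer to the template of Theorem~\ref{thm::2}. Two small points you leave implicit but should state: that $\Gamma'$ is undirected and connected before invoking Lemma~\ref{lem::5} (real spectrum via Corollary~\ref{cor::1}, simplicity of the top eigenvalue $2|X|$), and that $\mu-\lambda=|B|$ follows because $\lambda-\mu<0$ must equal the unique negative eigenvalue $-|B|$ of the complete multipartite graph $K_{(n/|B|)\times |B|}$.
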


\begin{proof}
Note that $f(B)=B$, and  $X$ is a union of some cosets of $B$ in $A$ if and only if $f(X)$ is a union of some cosets of $B$ in $A$. By symmetry, we only need to prove the theorem for $Y=A\setminus X$. 
First assume that $\Gamma$ is a DSRG with parameters $(2n,k,\mu,\lambda,t)$. Putting $Y=A\setminus  X$ in \eqref{equ::2} and \eqref{equ::3}, we obtain
\begin{align}
&\overline{X}^2+\overline{X}f(\overline{X})\alpha=(t-\mu)e+(\lambda-\mu)\overline{X}+(2|X|+\mu-n)\overline{A},\label{equ::15}\\ 
&\overline{X}^2+\overline{X}f(\overline{X})=(\lambda-\mu)\overline{X}+(2|X|-\lambda)\overline{A}.\label{equ::16}
\end{align}
Recall that $f$ is an automorphism of $A$ with order $2$. By applying $f$ to both sides of \eqref{equ::16}, we have
\begin{equation}\label{equ::17}
f(\overline{X})^2+\overline{X}f(\overline{X})=(\lambda-\mu)f(\overline{X})+(2|X|-\lambda)\overline{A}.
\end{equation}
Combining \eqref{equ::16} and  \eqref{equ::17} yields that
\begin{equation*}
    (\overline{X}+f(\overline{X}))^2=(\lambda-\mu)(\overline{X}+f(\overline{X}))+2(2|X|-\lambda)\overline{A}.
\end{equation*}
Let $U=X\cup f(X)$. Then we have
\begin{equation}\label{equ::18}
\overline{U}^2=(4|X|-2\lambda)  e+(4|X|-\lambda-\mu)\overline{U}+(4|X|-2\lambda)(\overline{A}-e-\overline{U}).
\end{equation} 
On the other hand, since $X\cap f(X)=\emptyset$, by Theorem \ref{thm::1},  $\Gamma':=\mathrm{Cay}(A,U)$ is an undirected Cayley graph, and hence $U=U^{-1}$. By comparing the coefficient of $e$ in both sides of \eqref{equ::18}, we obtain $2|X|=4|X|-2\lambda$, i.e., $|X|=\lambda$. Combining Lemma \ref{lem::1} and \eqref{equ::18}, we conclude that either $U=A\setminus \{e\}$, or $\Gamma'$ is a SRG with parameters $(n,2\lambda,3\lambda-\mu,2\lambda)$.

If $U=A\setminus\{e\}$, then $B=\{e\}$ is a trivial subgroup of $A$, and $X$ is naturally a union of some cosets of $B$.  Note that $n-1=|U|=2|X|=2\lambda$, i.e., $\lambda=\frac{n-1}{2}$. Putting $\overline{U}=\overline{A}-e$ in  \eqref{equ::18}, we obtain $(\mu-\frac{n+1}{2})(\overline{A}-e)=0$, and hence $\mu=\frac{n+1}{2}$. Also, since $f(\alpha)=\alpha$ and $X\cap f(X)=\emptyset$,  we have $\alpha=e$. Then it follows from \eqref{equ::15} and \eqref{equ::16} that $t=\mu=\frac{n+1}{2}$. Therefore, in this situation, the parameters of  $\Gamma$ are  $(2n,n-1,\frac{n+1}{2},\frac{n-1}{2},\frac{n+1}{2})$.

If $\Gamma'=\mathrm{Cay}(A,U)$ is a SRG with parameters  $(n,2\lambda,3\lambda-\mu,2\lambda)$, then it would be a complete multipartite graph. As in the proof of Theorem \ref{thm::2}, we can prove that $B=A\setminus U$ is a subgroup of $A$.  Let $|B|=\ell$.
Then we see that $n-\ell=2\lambda$ and  $3\lambda-\mu=n-2\ell$, i.e., $\lambda=\frac{n-\ell}{2}$ and $\mu=\frac{n+\ell}{2}$.
Also note that $\alpha\in B$ due to $\alpha\notin U$. As $f(\overline{X})=\overline{A}-\overline{B}-\overline{X}$,  by \eqref{equ::15} and \eqref{equ::16},
\begin{align}
&\overline{X}^2+\overline{X}(\overline{A}-\overline{B}-\overline{X})\alpha=\left(t-\frac{n+\ell}{2}\right)e-\ell\overline{X}+\frac{n-\ell}{2}\overline{A},\label{equ::19}\\
&\overline{X}^2+\overline{X}(\overline{A}-\overline{B}-\overline{X})=-\ell\overline{X}+\frac{n-\ell}{2}\overline{A}.\label{equ::20}
\end{align}
Since $|X|=\lambda=\frac{n-\ell}{2}$, $\alpha\in B$, and $B$ is a subgroup of $A$, from  \eqref{equ::19} and \eqref{equ::20} we obtain
\begin{align}
&\overline{X}^2-\overline{X}^2\alpha=\left(t-\frac{n+\ell}{2}\right)e,\label{equ::21}\\
&\overline{X}\cdot\overline{B}=\ell\overline{X}.\label{equ::22}
\end{align}
Clearly, the equality in \eqref{equ::22} implies that $X$ is a union of some cosets of $B$. Furthermore, by applying the trivial character to both sides of \eqref{equ::21}, we obtain $|X|^2-|X|^2=0=t-\frac{n+\ell}{2}$. Hence, $t=\frac{n+\ell}{2}$. Therefore, the parameters of $\Gamma$ are given by $(2n,n-\ell,\frac{n+\ell}{2},\frac{n-\ell}{2},\frac{n+\ell}{2})$.

Conversely, if $B=A\setminus (X\cup f(X))$ is a subgroup of $A$ and $X$ is a union of some cosets of $B$ in $A$, then one can easily verify that $X$ satisfies  \eqref{equ::15} and \eqref{equ::16} with $t=\mu=\frac{n+\ell}{2}$ and $\lambda=\frac{n-\ell}{2}$, where $\ell=|B|$. Therefore, $\Gamma$ is a DSRG by Lemma \ref{lem::6}.
\end{proof}

In what follows, we focus on considering the situation that  $Y=X\cup \{e\}$, which is a bit more complicated.

\begin{theorem}\label{thm::5}
Suppose that $X\cap f(X)=\emptyset$ and $Y=X\cup \{e\}$. Then the Cayey graph $\Gamma$ is a DSRG with parameters $(2n,k,\mu,\lambda,t)$ if and only if $k=2|X|+1$, $\alpha=e$, $f(X)=X^{-1}$, $|X|=\lambda$, $t=\lambda+1$, $n=(\lambda+1)(\lambda+\mu)/\mu$,  and one of the following situations occurs:
\begin{enumerate}[(i)]
    \item $\mu=\lambda+1$, and $X\cup X^{-1}=A\setminus\{e\}$; 
    \item $\mu\leq \lambda$, and $\mathrm{Cay}(G,X\cup X^{-1})$ is a SRG with parameters $(n,2\lambda,\lambda+\mu-2,2\mu)$ for which $X$ satisfies the equation 
\begin{equation}\label{equ::23}
    \overline{X}^2+(\lambda-\mu)\overline{X^{-1}}=\overline{X^{-1}}^2+(\lambda-\mu)\overline{X}.
\end{equation}
\end{enumerate}
In particular, the graph  $\mathrm{Cay}(G,X\cup X^{-1})$ in case (ii) has the least eigenvalue $-2$.
\end{theorem}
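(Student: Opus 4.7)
The plan is to prove necessity by exploiting the substitution $Y = X\cup\{e\}$ in the Lemma~\ref{lem::6} identities; the converse is a direct verification via the same lemma. Substituting $\overline{Y} = \overline{X}+e$ into \eqref{equ::2} and \eqref{equ::3} and expanding yields
\[
\overline{X}^2 + \bigl(\overline{X}f(\overline{X}) + \overline{X} + f(\overline{X}) + e\bigr)\alpha = (t-\mu)e + (\lambda-\mu)\overline{X} + \mu\overline{A}, \qquad (\dagger)
\]
\[
\overline{X}^2 + \overline{X}f(\overline{X}) + \overline{X} + f(\overline{X}) = (\lambda-\mu)\overline{X} + (\lambda-\mu)e + \mu\overline{A}. \qquad (\ddagger)
\]
Writing $T := \overline{X}+f(\overline{X})$ and $U := X\cup f(X)$, applying $f$ to $(\ddagger)$ and adding the two identities produces $T^2 = (\lambda-\mu-2)T + 2(\lambda-\mu)e + 2\mu\overline{A}$. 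Since Theorem~\ref{thm::1}(ii) already guarantees that $U = U^{-1}$, this is precisely the Lemma~\ref{lem::1} criterion for $\Gamma' := \mathrm{Cay}(A,U)$ to be either an SRG with parameters $(n, 2\lambda, \lambda+\mu-2, 2\mu)$ or the complete graph $K_n$ (i.e.\ $U = A\setminus\{e\}$). Substituting $T = \overline{A}-e$ in the degenerate case forces $\mu = \lambda+1$ and $n = 2\lambda+1$, producing case~(i); the genuine SRG alternative gives case~(ii).

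Applying the trivial character of $A$ to $(\dagger)$ and $(\ddagger)$ and subtracting yields $t = \lambda+1$. Reading $|U| = 2\lambda$ from the SRG parameter list (or directly $|U| = n-1 = 2\lambda$ in case~(i)), together with $|U| = 2|X|$ (from $X\cap f(X) = \emptyset$), gives $|X| = \lambda$, so $k = 2|X|+1 = 2\lambda+1$; the standard DSRG count $k(k-\lambda+\mu) = t + \mu(2n-1)$ then produces $n = (\lambda+1)(\lambda+\mu)/\mu$.

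The main obstacle is the derivation of $\alpha = e$ and $f(X) = X^{-1}$. For $\alpha = e$, eliminating $\overline{X}f(\overline{X})$ between $(\dagger)$ and $(\ddagger)$ (solve for it from $(\ddagger)$ and substitute into $(\dagger)$) yields
\[
\bigl(\overline{X}^2 - (\lambda-\mu)\overline{X}\bigr)(e-\alpha) = (t-\mu)e - (\lambda-\mu+1)\alpha.
\]
Since $f(\alpha)=\alpha$ and $X\cap f(X)=\emptyset$ force $\alpha\in B\cup\{e\}$ where $B := A\setminus U$, assuming $\alpha\ne e$ and comparing coefficients coset by coset along $\langle\alpha\rangle$ forces $\overline{X}^2-(\lambda-\mu)\overline{X}$ to be constant on every coset of $\langle\alpha\rangle$ disjoint from $\langle\alpha\rangle$ and to admit only a single admissible jump inside $\langle\alpha\rangle$ itself; coupling this with the complete-multipartite structure of $\Gamma'$ (which makes $B$ a subgroup containing $\langle\alpha\rangle$, by the argument already used in Theorem~\ref{thm::2}) violates integer non-negativity of the coefficients of $\overline{X}f(\overline{X})$. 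For $f(X) = X^{-1}$, the inverse-closedness $U=U^{-1}$ combined with $X\cap f(X)=\emptyset$ leaves only the two possibilities $X = X^{-1}$ or $f(X) = X^{-1}$; excluding the first is the most delicate step, and my approach is to show that under $X=X^{-1}$ a separate coefficient comparison in $(\ddagger)$ together with the $f$-image of $(\ddagger)$ forces $(\overline{X}-f(\overline{X}))(T-(\lambda-\mu)e) = 0$ to degenerate in a way that is incompatible with $\Gamma$ having exactly three distinct eigenvalues unless $X = \emptyset$, contradicting $|X| = \lambda \ge 1$.

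Once $\alpha = e$ and $f(X) = X^{-1}$ are established, $(\ddagger)$ reads as an identity purely in $\overline{X}$ and $\overline{X^{-1}}$; subtracting its image under $g\mapsto g^{-1}$ yields equation~\eqref{equ::23} after routine simplification. The least-eigenvalue-$-2$ claim in case~(ii) is a one-line SRG computation: for parameters $(n, 2\lambda, \lambda+\mu-2, 2\mu)$ the eigenvalue discriminant $(\lambda-\mu-2)^2 + 8(\lambda-\mu) = (\lambda-\mu+2)^2$ is a perfect square, giving non-principal eigenvalues $\lambda-\mu$ and $-2$. For sufficiency, given the listed conditions, one substitutes $\alpha = e$, $Y = X\cup\{e\}$, and $f(\overline{X}) = \overline{X^{-1}}$ into $(\dagger)$ and $(\ddagger)$, uses \eqref{equ::23} and the SRG (or $K_n$) identity for $T^2$ to reconcile the cross-terms, and verifies both identities directly; Lemma~\ref{lem::6} then concludes.
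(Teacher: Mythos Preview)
Your outline matches the paper's proof through the derivation of $(\dagger)$, $(\ddagger)$, the $T^2$ identity, the SRG/$K_n$ dichotomy, and the parameter bookkeeping ($t=\lambda+1$, $|X|=\lambda$, $n=(\lambda+1)(\lambda+\mu)/\mu$, the least eigenvalue $-2$). The converse verification is also fine. However, the two ``hard'' steps you flag contain genuine gaps.

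\textbf{The deduction $f(X)=X^{-1}$.} Your assertion that ``$U=U^{-1}$ combined with $X\cap f(X)=\emptyset$ leaves only the two possibilities $X=X^{-1}$ or $f(X)=X^{-1}$'' is false. Those hypotheses say only that $\{X,f(X)\}$ and $\{X^{-1},f(X^{-1})\}$ are two partitions of $U$ into equal-size blocks; the partitions need not coincide. For a concrete failure take $A=\mathbb{Z}_5^2$, $f(a,b)=(b,a)$, and $X=\{(1,0),(4,0),(1,2),(3,4)\}$: here $X\cap f(X)=\emptyset$ and $U=U^{-1}$, yet $X\cap X^{-1}=\{(1,0),(4,0)\}$ and $X\cap f(X)^{-1}=\{(1,2),(3,4)\}$ are both nonempty, so neither $X=X^{-1}$ nor $f(X)=X^{-1}$ holds. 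Consequently your plan to ``exclude $X=X^{-1}$'' does not cover all cases. The paper argues differently: from $t=|S\cap S^{-1}|$ together with $t=\lambda+1=|Y|$ one obtains $|X\cap X^{-1}|=0$; then $X^{-1}\subseteq U\setminus X=f(X)$, and equality follows by cardinality.

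\textbf{The deduction $\alpha=e$ in case (ii).} You invoke ``the complete-multipartite structure of $\Gamma'$ (which makes $B$ a subgroup containing $\langle\alpha\rangle$)'', but in case (ii) the graph $\Gamma'=\mathrm{Cay}(A,U)$ is an SRG with parameters $(n,2\lambda,\lambda+\mu-2,2\mu)$, and this is complete multipartite only when $2\mu=2\lambda$. For $\mu<\lambda$ the set $B=A\setminus U$ is in general \emph{not} a subgroup, so your coset-of-$\langle\alpha\rangle$ argument has no foundation. The paper's route is a short coefficient comparison: reading off the coefficient of $e$ in $(\ddagger)$ shows that $e$ occurs exactly $\lambda$ times in $\overline{X}(\overline{X}+f(\overline{X}))$, hence $\alpha$ occurs $\lambda$ times in $\overline{X}(\overline{X}+f(\overline{X}))\alpha$; since $e\notin X\cup f(X)$, the term $(\overline{X}+f(\overline{X}))\alpha$ contributes no $\alpha$, so the coefficient of $\alpha$ on the left of $(\dagger)$ is $\lambda+1$, while on the right (using $\alpha\notin X$ and $\alpha\neq e$) it equals $\mu$. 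This forces $\mu=\lambda+1$, contradicting $\mu\le\lambda$ in case (ii).
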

\begin{proof}
Suppose that $\Gamma$ is a DSRG with parameters $(2n,k,\mu,\lambda,t)$. Then $k=2|X|+1$. Putting $Y=X\cup \{e\}$ in  \eqref{equ::2} and \eqref{equ::3}, we obtain
\begin{align}
&\overline{X}(\overline{X}+f(\overline{X}))\alpha+(\overline{X}+f(\overline{X}))\alpha+\alpha=(t-\mu)e+(\lambda-\mu)\overline{X}+\mu\overline{A},\label{equ::24}\\
&\overline{X}(\overline{X}+f(\overline{X}))+\overline{X}+f(\overline{X})=(\lambda-\mu)e+(\lambda-\mu)\overline{X}+\mu\overline{A}.\label{equ::25}   
\end{align}
By applying the trivial character to both sides of \eqref{equ::24} and \eqref{equ::25}, we obtain $2|X|^2+2|X|+1=t-\mu+(\lambda-\mu)|X|+\mu n$ and $2|X|^2+2|X|=\lambda-\mu+(\lambda-\mu)|X|+\mu n$, respectively. Hence,  $t=\lambda+1$. Also, by applying $f$ to both sides of \eqref{equ::25}, we get
\begin{equation}\label{equ::26}  
f(\overline{X})(\overline{X}+f(\overline{X}))+\overline{X}+f(\overline{X})=(\lambda-\mu)e+(\lambda-\mu)f(\overline{X})+\mu\overline{A}.
\end{equation}
Combining \eqref{equ::25} and \eqref{equ::26} yields that 
\begin{align}
&(\overline{X}+f(\overline{X}))^2=2(\lambda-\mu)e+(\lambda-\mu-2)(\overline{X}+f(\overline{X}))+2\mu\overline{A},\label{equ::27}\\
&\overline{X}^2-f(\overline{X})^2=(\lambda-\mu)(\overline{X}-f(\overline{X})).\label{equ::28}   
\end{align}
Let $U=X\cup f(X)$. Then it follows from \eqref{equ::27} that 
\begin{equation}\label{equ::29}
\overline{U}^2=2\lambda e+(\lambda+\mu-2)\overline{U}+2\mu(\overline{A}-e-\overline{U}).
\end{equation}
Suppose $\Gamma'=\mathrm{Cay}(A,U)$.
By Theorem \ref{thm::1},  $\Gamma'$ is an undirected simple graph, and  hence $U=U^{-1}$. By comparing the coefficient of $e$ in both sides of \eqref{equ::29}, we obtain $|X|=\lambda$. Since $t=\lambda+1=|X|+1=|Y|$, we see that $X\cap X^{-1}=\emptyset$, and hence $f(X)=X^{-1}$. By applying the trivial character to both sides of \eqref{equ::29}, we obtain $n=(\lambda+1)(\lambda+\mu)/\mu$.  Combining Lemma \ref{lem::1} and  \eqref{equ::29}, we conclude that either $U=A\setminus\{e\}$, or $\Gamma'$ is a SRG with parameters $(n,2\lambda,\lambda+\mu-2,2\mu)$. 

If $U=A\setminus\{e\}$, then $\alpha=e$ because $\alpha\notin X\cup f(X)$ due to $X\cap f(X)=\emptyset$. Moreover, since $n=2\lambda+1=(\lambda+1)(\lambda+\mu)/\mu$, we have $\mu=\lambda+1$, and so $\Gamma$ has the parameters $(4\lambda+2,2\lambda+1,\lambda+1,\lambda,\lambda+1)$, where $\lambda=|X|$. This proves (i).

If $\Gamma'$ is a SRG with parameters $(n,2\lambda,\lambda+\mu-2,2\mu)$, then $\lambda\geq \mu$, and we assert that $\alpha=e$. In fact, if $\alpha\neq e$, from \eqref{equ::25} we see that $e$ occurs exactly $\lambda$ times in $\overline{X}(\overline{X}+f(\overline{X}))$, which implies that $\alpha$ occurs exactly $\lambda$ times in $\overline{X}(\overline{X}+f(\overline{X}))\alpha$. Since $e \notin X \cup f(X)$, $\alpha$ cannot occur in $(\overline{X}+f(\overline{X}))\alpha$. Recall that $t=\lambda+1$. By comparing the coefficient of $\alpha$ in both sides of \eqref{equ::24}, we obtain $\lambda+1=\mu$, contray to $\lambda\geq \mu$.  Thus (ii) holds according to \eqref{equ::28}.  Furthermore, by \cite[Theorem 9.1.3]{BH11}, it is easy to verify that $\Gamma'$ has the least eigenvalue $-2$.

Conversely, if (i) or (ii) holds, it is routine to check that the equalities in \eqref{equ::24} and \eqref{equ::25} hold. Therefore, $\Gamma$ is a DSRG  with parameters $(2n,k,\mu,\lambda,t)$ by Lemma \ref{lem::6}.
\end{proof}

In \cite{ADJ17}, Abdollahi, van Dam and Jazaeri proved that every Cayley SRG with least eigenvalue
$-2$ must be one of the following graphs: 
\begin{enumerate}[(i)]
\item the Clebsch graph; 
\item the Shrikhande graph; 
\item the Schl\"{a}fli graph;
\item the cocktail party graph $CP(n)$, with $n\geq 2$;
\item the triangular graph $T(n)$, with $n=4$, or $n\equiv 3\pmod 4$ and $n$ a prime power, $n>4$;
\item the lattice graph $L_2(n)$, with $n\geq 2$.
\end{enumerate}
Therefore, the graph $\mathrm{Cay}(A,X\cup X^{-1})$ in Theorem \ref{thm::5} (ii) should be one of these graphs listed in (i)--(vi). This might help us to determine the structure of $X$. However, it seems difficult to determine all possible abelian groups $A$ and all possible connection sets $S$ such that $\mathrm{Cay}(A,S)$ is isomorphic to one of the above graphs. For this reason, in the following, we only focus on the situation that $A$ is a cyclic group. 

Recall that a \textit{circulant graph} is a Cayley graph over a cyclic group. To achieve our goal, we need a classic result about the characterization of circulant SRGs. 

\begin{lemma}[\cite{BM79,M84}]\label{lem::7}
A circulant graph of order $n$ is a SRG if and only if it is isomorphic to one of the following graphs:
\begin{enumerate}[(i)]
\item the complete multipartite graph $K_{t\times m}$ with $tm=n$;
\item the Paley graph $P(n)$, where $n\equiv 1\pmod 4$ is prime.
\end{enumerate}
\end{lemma}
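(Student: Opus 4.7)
The plan is to translate the problem into character sums over $\mathbb{Z}_n$ and exploit Galois theory on cyclotomic integers. Write $\Gamma = \mathrm{Cay}(\mathbb{Z}_n, S)$ with $S = -S$. By Lemma \ref{lem::1}, $\Gamma$ is a SRG with parameters $(n, k, \lambda, \mu)$ iff $\overline{S}^2 = ke + \lambda \overline{S} + \mu(\overline{\mathbb{Z}_n} - e - \overline{S})$; applying Lemma \ref{lem::3} with the characters $\chi_j(x) = \zeta^{jx}$ (where $\zeta = e^{2\pi i /n}$), one deduces that for every $j \neq 0$ the value $\theta_j := \chi_j(\overline{S})$ satisfies $\theta_j^2 - (\lambda - \mu)\theta_j - (k - \mu) = 0$, so $\theta_j \in \{r, s\}$, the two-element set of nontrivial SRG eigenvalues. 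The identities $\sum_{j=0}^{n-1}\theta_j = 0$ and $\sum_{j=0}^{n-1}\theta_j^2 = nk$ (obtained by reading off the coefficient of $e$ in $\overline{S}$ and $\overline{S}^2$, using $S = -S$) then pin down the multiplicities of $r$ and $s$.

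Next I would use Galois symmetry. For $\gcd(t, n) = 1$, the automorphism $\sigma_t \in \mathrm{Gal}(\mathbb{Q}(\zeta)/\mathbb{Q})$ sends $\theta_j$ to $\theta_{tj}$. Depending on whether the discriminant $(\lambda - \mu)^2 + 4(k - \mu)$ is a perfect square, $r$ and $s$ are either rational integers (the \emph{integral} case) or conjugate quadratic irrationals (the \emph{conference} case). In the integral case, each $\theta_j$ is Galois-invariant, so $\theta_j$ depends only on $\gcd(j, n)$. In the conference case, $r, s$ form a single Galois orbit of size $2$, and the preimages $\{j : \theta_j = r\}$ and $\{j : \theta_j = s\}$ are permuted by $\mathrm{Gal}(\mathbb{Q}(\zeta)/\mathbb{Q})$ as the cosets of an index-$2$ subgroup of $(\mathbb{Z}/n)^*$.

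For the integral case, I would apply M\"obius-type inversion over the divisor lattice of $n$ to show that the $\theta_j$-values on the classes $\{j : \gcd(j,n) = d\}$ collapse into only one nontrivial configuration: $\{j : \theta_j = s\}$ equals $\mathbb{Z}_n \setminus H$ for a proper subgroup $H \leq \mathbb{Z}_n$, forcing $\overline{S} = \overline{\mathbb{Z}_n} - \overline{H}$ and hence $\Gamma \cong K_{(n/|H|) \times |H|}$. For the conference case, requiring that $(\mathbb{Z}/n)^*$ possess an index-$2$ subgroup whose orbits on $\mathbb{Z}_n \setminus \{0\}$ split into exactly two equal-sized classes (and not finer, which would arise from any nontrivial divisor structure of $n$) forces $n = p$ to be an odd prime; the unique index-$2$ subgroup of $(\mathbb{Z}/p)^*$ is $\mathrm{QR}(p)$, so $S = \mathrm{QR}(p)$ up to swapping $r \leftrightarrow s$, and the symmetry $S = -S$ further demands $-1 \in \mathrm{QR}(p)$, i.e., $p \equiv 1 \pmod 4$, yielding $\Gamma \cong P(p)$.

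The main obstacle will be the integral case: showing that, once $\theta_j$ takes only two values and depends solely on $\gcd(j,n)$, the set $S$ must be the complement of a subgroup, ruling out ``hybrid'' configurations where different divisor-classes of $j$ split irregularly between $r$ and $s$. This is essentially the classification of rank-$3$ Schur rings over cyclic groups (equivalently, of regular partial difference sets in $\mathbb{Z}_n$), and its proof requires careful bookkeeping over the divisor lattice of $n$ using the two moment identities $\sum \theta_j = 0$ and $\sum \theta_j^2 = nk$, together with the positivity of the $0/1$ coefficients of $\overline{S}$ in the group ring.
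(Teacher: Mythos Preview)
The paper does not prove this lemma at all: it is quoted as a known classification result with citations to \cite{BM79,M84} and used as a black box in the proof of Corollary~\ref{cor::5}. There is therefore no ``paper's own proof'' to compare against.

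As for the content of your outline, it is the standard route taken in the cited references (character sums, Galois action on cyclotomic integers, splitting into the integral and conference cases), and you have correctly identified the genuine crux: in the integral case one must rule out ``hybrid'' divisor-class configurations and show that $S$ is forced to be the complement of a subgroup. Your description of that step (``M\"obius-type inversion \dots\ using the two moment identities'') is a plan, not a proof; the actual argument in \cite{BM79,M84} is where the real work lies, and nothing in your write-up carries it out. Your conference-case sketch also skates over a point: the claim that the index-$2$ orbit structure on $\mathbb{Z}_n\setminus\{0\}$ forces $n$ prime needs the observation that for any proper divisor $d\mid n$ the values $\theta_j$ with $\gcd(j,n)=d$ lie in the strictly smaller field $\mathbb{Q}(\zeta_{n/d})$, and one must check that the irrational eigenvalues $r,s=\tfrac{-1\pm\sqrt{n}}{2}$ cannot live there---this is what ultimately excludes composite $n$, not merely the existence of an index-$2$ subgroup of $(\mathbb{Z}/n\mathbb{Z})^{*}$.

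In short: your plan matches the literature approach, but since the paper treats the lemma as an imported result, the honest thing to do here is simply cite \cite{BM79,M84} rather than attempt to reprove it.
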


\begin{corollary}\label{cor::5}
Suppose that $A=\langle a\mid a^n=1\rangle$, $X\cap f(X)=\emptyset$ and $Y=X\cup \{e\}$. Then the Cayley graph $\Gamma$ is a DSRG if and only if $\alpha=e$, $f(X)=X^{-1}$, and $X=a^T:=\{a^i\mid i\in T\}$, where $T$ is a subset of $\mathbb{Z}_n\setminus\{0\}$ such that  $T\cap (-T)=\emptyset$ and either $T\cup (-T)=\mathbb{Z}_n\setminus\{0\}$, or $T\cup (-T)=\mathbb{Z}_n\setminus\{0,n/2\}$ and $T=n/2-T$ ($n$ is even).
\end{corollary}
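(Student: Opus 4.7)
The plan is to derive the corollary directly from Theorem \ref{thm::5} together with the classification of circulant SRGs in Lemma \ref{lem::7}. For the forward direction, I assume $\Gamma$ is a DSRG and apply Theorem \ref{thm::5}, which immediately yields $\alpha=e$, $f(X)=X^{-1}$, and $X\cap X^{-1}=\emptyset$; writing $X=a^T$ with $T\subseteq\mathbb{Z}_n\setminus\{0\}$, this gives $T\cap(-T)=\emptyset$. Theorem \ref{thm::5}(i) produces $X\cup X^{-1}=A\setminus\{e\}$, that is $T\cup(-T)=\mathbb{Z}_n\setminus\{0\}$, matching the first alternative, so the real work lies in analysing Theorem \ref{thm::5}(ii).

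In case (ii), $\Gamma':=\mathrm{Cay}(A,X\cup X^{-1})$ is a circulant SRG with least eigenvalue $-2$, so by Lemma \ref{lem::7} it is isomorphic to $K_{t\times m}$ with $tm=n$ or to a Paley graph $P(n)$. The Paley graph has spectrum $\{(n-1)/2,(-1\pm\sqrt{n})/2\}$, whose minimum equals $-2$ only at $n=9$ (not prime), so this case is excluded. For $K_{t\times m}$ the nontrivial eigenvalues are $0$ and $-m$, so $m=2$, giving $\Gamma'\cong CP(n/2)$ with $n$ even. The complement of $\Gamma'$ in $A$ is then a perfect matching whose connection set consists of a single self-inverse nonidentity element, which in a cyclic group of even order must be $a^{n/2}$; hence $T\cup(-T)=\mathbb{Z}_n\setminus\{0,n/2\}$.

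It remains to deduce $T=n/2-T$. The parameters of $CP(n/2)$ force $\lambda=\mu=n/2-1$, so equation \eqref{equ::23} collapses to $\overline{X}^2=\overline{X^{-1}}^2$, which factors using $\overline{X}+\overline{X^{-1}}=\overline{A}-e-a^{n/2}$ and $(\overline{X}-\overline{X^{-1}})\overline{A}=0$ into $(\overline{X}-\overline{X^{-1}})(e+a^{n/2})=0$. Evaluating this at the character $\chi_j$ of $\mathbb{Z}_n$ given by $\chi_j(a)=e^{2\pi ij/n}$, for even $j$ the factor $1+\chi_j(a^{n/2})=2$ forces $\chi_j(\overline{X})$ to be real, while for odd $j$ the relation $\chi_j(\overline{X}+\overline{X^{-1}})=-1-(-1)^j=0$ forces $\chi_j(\overline{X})$ to be purely imaginary. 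In both parities, $\chi_j(a^{n/2}\overline{X})=(-1)^j\chi_j(\overline{X})=\overline{\chi_j(\overline{X})}=\chi_j(\overline{X^{-1}})$, so Lemma \ref{lem::2} yields $a^{n/2}\overline{X}=\overline{X^{-1}}$, i.e.,\ $T=n/2-T$.

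For the converse, each listed configuration must be checked to satisfy one of the alternatives of Theorem \ref{thm::5}. The first is exactly Theorem \ref{thm::5}(i). For the second, $\mathrm{Cay}(A,X\cup X^{-1})$ is realised as the known SRG $CP(n/2)$ with parameters $(n,n-2,n-4,n-2)$, and equation \eqref{equ::23} holds because $T=n/2-T$ makes both sides equal by the reversed character argument. The most delicate step of the whole proof is this character analysis in case (ii): the cocktail-party structure of $\Gamma'$ imposes precisely the reality/imaginarity constraints on $\chi_j(\overline{X})$ needed to encode the involution $a^{n/2}$ acting on the indexing set $T$, and separating the even- and odd-$j$ cases is what unlocks the identification $a^{n/2}\overline{X}=\overline{X^{-1}}$.
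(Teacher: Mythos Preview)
Your proof is correct and follows essentially the same route as the paper: apply Theorem~\ref{thm::5}, dispose of case (i) directly, and in case (ii) invoke Lemma~\ref{lem::7} to reduce to either a Paley graph (ruled out at $n=9$) or $K_{t\times m}$ with $m=2$, then translate \eqref{equ::23} into the condition $T=n/2-T$. The only differences are cosmetic: where the paper matches the SRG parameters $(n,2\lambda,\lambda+\mu-2,2\mu)$ against those of $P(n)$ and $K_{t\times m}$ to force $n=9$ and $m=2$, you use the least-eigenvalue $-2$ statement from Theorem~\ref{thm::5} to reach the same conclusions; and where the paper simply asserts that \eqref{equ::23} ``is actually equivalent to $T=n/2-T$'', you supply a clean character-theoretic proof via the factorisation $(\overline{X}-\overline{X^{-1}})(e+a^{n/2})=0$ and the even/odd split on $j$. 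Both arguments are short, and yours has the virtue of making the last step explicit.
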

\begin{proof}
Assume that $\Gamma$ is a DSRG. By Theorem \ref{thm::5}, we have $\alpha=e$, $f(X)=X^{-1}$, and either $X\cup X^{-1}=A\setminus \{e\}$, or $\Gamma':=\mathrm{Cay}(X\cup X^{-1})$ is a SRG with parameters $(n=(\lambda+1)(\lambda+\mu)/\mu,2\lambda,\lambda+\mu-2,2\mu)$ for which $X$ satisfies the equality in \eqref{equ::23},
where $\lambda=|X|$ and $\mu\leq \lambda$. Suppose $X=a^T$ with $T\subseteq \mathbb{Z}_n\setminus\{0\}$. Clearly, $T\cap (-T)=\emptyset$.   For the former case, we obtain  $T\cup (-T)=\mathbb{Z}_n\setminus\{0\}$, as required. For the later case, since $\Gamma'$ is a  circulant SRG of order $n$, by Lemma \ref{lem::7}, 
$\Gamma'$ must be the Paley graph $P(n)$, where $n\equiv 1\pmod 4$ is prime, or the complete multipartite graph $K_{t\times m}$ with $tm=n$. It is known that the Payley graph $P(n)$ has the parameters $(n,(n-1)/2,(n-5)/4,(n-1)/4)$ (cf. \cite[Proposition 9.1.1]{BH11}). If $\Gamma'=P(n)$, then we must have $n=9$, contrary to the fact that $n$ is prime. Therefore, $\Gamma'=K_{t\times m}$ with $tm=n$. Note that the complete multipartite graph $K_{t\times m}$ ($tm=n$) is a SRG with parameters $(n,n-m,n-2m,n-m)$. Then we have $m=2$, $\mu=\lambda$, $n=2\lambda+2$, and hence $X\cup X^{-1}=A\setminus \{e,a^{\frac{n}{2}}\}$, i.e., $T\cup (-T)=\mathbb{Z}_n\setminus\{0,n/2\}$ ($n$ is even). Furthermore, it easy to verify that  the equality in \eqref{equ::23} is actually equivalent to $T=n/2-T$. This proves the necessity. Conversely, it is easy to verify the sufficiency by using Theorem \ref{thm::5}. 
\end{proof}

\begin{remark}\label{rem::3}
\emph{If $G$ is a dihedral group, then Corollary \ref{cor::5} determines all Cayley DSRGs of the from $\mathrm{Cay}(G,X\cup X\beta\cup \{\beta\})$ with $X\cap X^{-1}=\emptyset$. However, if $G$ is a dicyclic group of order at least $8$, then $\alpha\neq e$ in $G$, and so Corollary \ref{cor::5} implies that there are no Cayley DSRGs of the form $\mathrm{Cay}(G,X\cup X\beta\cup \{\beta\})$ with $X\cap X^{-1}=\emptyset$.}
\end{remark}

At the end of this section, we construct a class of Cayley DSRGs over $G$ with $X\cap f(X)\neq \emptyset$.

\begin{theorem}\label{thm::6}
Let $n$ be a positive even integer. Suppose that $X$ is a subset of $A\setminus\{e\}$ satisfying the following conditions:
\begin{enumerate}[(i)]
\item $B=A\setminus(X\cup f(X))$ is a subgroup of $A$ such that $\alpha\in B$,
\item $X$ is a union of some cosets of $B$,
\item $X\cap f(X)=aB$ and $X\cup aX=A$ for some $a\in A$.
\end{enumerate}
Then $\Gamma=\mathrm{Cay}(G,X\cup X\beta)$ is a DSRG with parameters $(2n,n,\frac{n}{2}+\ell,\frac{n}{2}-\ell,\frac{n}{2}+\ell)$, where $\ell=|B|$. 
\end{theorem}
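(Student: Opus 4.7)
The plan is to apply Lemma \ref{lem::6} directly with $Y=X$. First I will verify the cardinality condition: from (iii), $|X\cap f(X)|=|aB|=\ell$, while (i) gives $|X\cup f(X)|=n-\ell$, so $2|X|=|X\cup f(X)|+|X\cap f(X)|=n$, i.e., $|X|=n/2$. Hence $k=|X|+|Y|=n$, as required.

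Next, with the claimed parameters $t=\mu=\tfrac{n}{2}+\ell$ and $\lambda=\tfrac{n}{2}-\ell$, the right-hand sides of \eqref{equ::2} and \eqref{equ::3} both equal $-2\ell\,\overline{X}+(\tfrac{n}{2}+\ell)\overline{A}$, since $t-\mu=0$. I will then show that the two left-hand sides coincide: since $\alpha\in B$ by (i) and $X$ (hence $f(X)$, because $f(B)=B$) is a union of cosets of $B$ by (ii), we have $\overline{X}\alpha=\overline{X}$ and consequently $\overline{X}f(\overline{X})\alpha=\overline{X}f(\overline{X})$. Therefore both \eqref{equ::2} and \eqref{equ::3} reduce to the single identity
\[
\overline{X}^{\,2}+\overline{X}\,f(\overline{X})=-2\ell\,\overline{X}+\Bigl(\frac{n}{2}+\ell\Bigr)\overline{A}.
\]

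To prove this identity, I will compute $\overline{X}\bigl(\overline{X}+f(\overline{X})\bigr)$. From (i) and (iii),
\[
\overline{X}+f(\overline{X})=\overline{X\cup f(X)}+\overline{X\cap f(X)}=(\overline{A}-\overline{B})+\overline{aB}.
\]
Using the standard identities in $\mathbb{C}A$, namely $\overline{X}\cdot\overline{A}=\tfrac{n}{2}\,\overline{A}$, $\overline{X}\cdot\overline{B}=\ell\,\overline{X}$ (because $XB=X$ by (ii)), and $\overline{X}\cdot\overline{aB}=a\,\overline{X}\cdot\overline{B}=\ell\,\overline{aX}$, we get
\[
\overline{X}\bigl(\overline{X}+f(\overline{X})\bigr)=\frac{n}{2}\,\overline{A}-\ell\,\overline{X}+\ell\,\overline{aX}.
\]
Finally, $X\cup aX=A$ together with $|X|+|aX|=n=|A|$ forces $X\cap aX=\emptyset$, so $\overline{aX}=\overline{A}-\overline{X}$. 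Substituting yields exactly $(\tfrac{n}{2}+\ell)\overline{A}-2\ell\,\overline{X}$, completing the verification.

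There is no deep obstacle here; the argument is a matter of careful bookkeeping in the group algebra $\mathbb{C}A$. The one subtlety to handle with care is making sure that $\alpha\in B$ and (ii) are used to equate the $\alpha$-twisted product $\overline{X}f(\overline{X})\alpha$ with $\overline{X}f(\overline{X})$, so that the two a priori different equations of Lemma \ref{lem::6} collapse into a single identity, which is then dispatched by the coset computation above.
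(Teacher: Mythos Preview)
Your proof is correct and follows essentially the same route as the paper's own proof: both verify Lemma~\ref{lem::6} by writing $\overline{X}+f(\overline{X})=\overline{A}-\overline{B}+a\overline{B}$, using $\alpha\in B$ and condition~(ii) to drop the factor $\alpha$, and then computing $\overline{X}(\overline{X}+f(\overline{X}))$ via the coset identities together with $\overline{aX}=\overline{A}-\overline{X}$. You are simply more explicit than the paper about the cardinality count $|X|=n/2$ and about why both equations \eqref{equ::2} and \eqref{equ::3} collapse to a single identity.
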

\begin{proof}
By assumption, we have  $\overline{X}+\overline{f(X)}=\overline{A}-\overline{B}+a\overline{B}$, and $|X|=\frac{n}{2}$. Let $\ell=|B|$. According to (i)-(iii), we see that
\begin{equation*}
\begin{aligned}
\overline{X}^2+\overline{X}f(\overline{X})\alpha&=\overline{X}^2+\overline{X}f(\overline{X})\\
&=\overline{X}^2+\overline{X}(\overline{A}-\overline{B}+a\overline{B}-\overline{X})\\
&=\overline{X}\cdot\overline{A}-\overline{X}\cdot\overline{B}+\overline{X}\cdot a\overline{B}\\
&=|X|\overline{A}-|B|\overline{X}+|B|\cdot a\overline{X}\\
&=|X|\overline{A}-|B|\overline{X}+|B|\cdot (\overline{A}-\overline{X})\\
&=-2|B|\overline{X}+(|X|+|B|)\overline{A}\\
&=-2\ell\overline{X}+\left(\frac{n}{2}+\ell\right)\overline{A}.
\end{aligned}
\end{equation*}
By Lemma \ref{lem::6}, the result follows.
\end{proof}

\begin{remark}\label{rem::4}
\emph{
It is worth mentioning that the construction in Theorem \ref{thm::6} extends the one given in \cite[Construction 6.6]{HZ19}.}
\end{remark}

\section{Further research}
Let $G$ be a non-abelian group with an abelian subgroup of index $2$. In this paper, we give some necessary conditions for a Cayley graph over $G$ to be directed strongly regular, and characterize the Cayley DSRGs over $G$ satisfying specified conditions. In particular, we determine Cayley DSRGs over $G$ with parameters $(2n,n-1,\frac{n-1}{2},\frac{n-3}{2},\frac{n-1}{2})$ (see Theorem \ref{thm::3}), and provide partial characterization for Cayley DSRGs over $G$ whose parameters are of the form:
\begin{enumerate}[(F1)]
    \item $(2n,n-\ell,\frac{n-\ell}{2},\frac{n-3\ell}{2},\frac{n-\ell}{2})$, or
    \item $(2n,n,\frac{n+\ell}{2},\frac{n-\ell}{2},\frac{n+\ell}{2})$, or
    \item $(2n,n,\frac{n}{2}+\ell,\frac{n}{2}-\ell,\frac{n}{2}+\ell)$.
\end{enumerate}
Motivated by these results, we pose the following problem for further research.
\begin{problem}\label{prob}
Determine the DSRGs (or Cayley DSRGs) whose parameters are of the form (F1), (F2) or (F3).
\end{problem}

\section*{Acknowledgments}
X. Huang is supported by National Natural Science Foundation of China (Grant No. 11901540). L. Lu is supported by National Natural Science Foundation of China (Grant No. 12001544) and  Natural Science Foundation of Hunan Province  (Grant No. 2021JJ40707).

\end{document}